\newcommand{\N}{\ensuremath {\mathbb{N}} }
\newcommand{\G}{\ensuremath {\mathcal{G}}}
\newcommand{\Go}{\ensuremath {\mathcal{G}^{(0)}}}
\newcommand{\Gs}{\ensuremath {\mathcal{G}^{(2)}}}
\newcommand{\Ga}{\ensuremath {\mathcal{G}^{a}}}
\newcommand{\Gx}{\ensuremath {G \ltimes_\theta X}}
\newcommand{\Lc}{\ensuremath {\mathcal{L}_c(X)}}
\newcommand{\Lcs}{\ensuremath {\mathcal{L}_c(X_s)}}
\newcommand{\Lco}{\ensuremath {\mathcal{L}_c(\Go)}}
\newcommand{\Lg}{\ensuremath {\mathcal{L}_c(X)\rtimes_{\alpha} G}}
\newcommand{\Lgo}{\ensuremath {\mathcal{L}_c(\Go)\rtimes_{\alpha} \Ga}}
\newcommand{\Skew}{\ensuremath {\mathcal{A} \rtimes_\pi S}}
\newcommand{\an}[1]{``#1''} 
\DeclareMathOperator{\supp}{supp}
\theoremstyle{plain}
\newtheorem{theorem}{Theorem}[section]
\newtheorem{lemma}[theorem]{Lemma}
\newtheorem{prop}[theorem]{Proposition}
\newtheorem{corollary}[theorem]{Corollary}
\newtheorem{definition}[theorem]{Definition}
\newtheorem{remark}[theorem]{Remark}
\begin{document}

\author{Viviane Maria Beuter and Daniel Gonçalves} \thanks{The second author is partially supported by CNPq.}

\title{The interplay between Steinberg algebras and partial skew rings}

\date{\today,\, \currenttime }

\begin{abstract}
We study the interplay between Steinberg algebras and partial skew rings: For a partial action of a group in a Hausdorff, locally compact, totally disconnected topological space, we realize the associated partial skew group ring as a Steinberg algebra (over the transformation groupoid attached to the partial action). We then apply this realization to characterize diagonal preserving isomorphisms of partial skew group rings, over commutative algebras, in terms of continuous orbit equivalence of the associated partial actions. Finally, we show that any Steinberg algebra, associated to a Hausdorff ample groupoid, can be seen as a partial skew inverse semigroup ring.\\
\end{abstract}

\maketitle

\noindent Key words: Steinberg algebra, partial skew group rings, partial skew inverse semigroup rings, diagonal preserving isomorphisms, continuous orbit equivalence of partial actions.\\

\noindent MSC 2010: 16W22, 16W55, 22A22, 47L40.

\pagestyle{headings}
\markright{STEINBERG ALGEBRAS AND PARTIAL SKEW RINGS}
\markleft{VIVIANE M. BEUTER AND DANIEL GONÇALVES}

\section{Introduction}

The notion of crossed product by a partial action has its origin in the operator algebra context, more precisely with the work of Exel (see \cite{Exel1}), on crossed products by a partial automorphism. Over the years the theory of partial actions has proved to be an important tool in the study of C*-algebras, and in particular of C*-algebras associated to  dynamical systems and combinatorial objects, see for example \cite{E1, E2, EL, Exel, GRU, GRU1} for a small glimpse of the theory developed. 

In 2005, Dokuchaev and Exel (see \cite{Dokuchaev1}) started the study of partial skew group rings (an “algebraisation” of the operator theory concept of partial crossed products). Following the steps of the operator theory counter-part, partial skew rings are becoming an important tool in the study of algebras, in particular algebras associated with combinatorial objects. For example, Leavitt path algebras were realized as partial skew group rings in \cite{Goncalves}, and simplicity and chain conditions for partial skew group rings were studied in \cite{GoncalvesCMB, GR} and \cite{OinertChain}, respectively. Very recently, see \cite{Dokuchaev}, Dokuchaev and Exel studied the ideal structure of the partial skew group ring $\Lg,$ where $G$ is a discrete group, $X$ is a locally compact, totally disconnected space $X$, and $\Lc$ is the algebra consisting of all locally constant, compactly supported functions on $X$ taking values in a given field $K.$  


Steinberg algebras were independently introduced by Steinberg in \cite{Steinberg} and by Clark et al. in \cite{Clark1}. 
They are the “algebraisation” of Renault's C*-algebras of groupoids. Even more than partial skew group rings, the development of the theory of Steinberg algebras has attracted a lot of attention lately. In particular, Steinberg algebras include the Kumjian–Pask algebras of higher-rank graphs introduced in \cite{Aranda} (which in turn include Leavitt path algebras). See \cite{Clark}, \cite{Clark3} and \cite{Steinberg2} for a few examples of the development of the theory. 

It is our goal in this paper to link the theory of partial skew rings with the theory of Steinberg algebras, in the same way that the theory of partial crossed products is linked to Renault's theory of groupoid C*-algebras. In particular we provide an “algebraisation” of the result of Abadie, see \cite{Abadie}, that shows that any partial crossed product, associated to a partial action on a topological space, can be seen as a groupoid C*-algebra. The algebraic version of this theorem permits us to join results of Li (see \cite{Xin}), about continous orbit equivalence of partial actions on topological spaces, and results of Carlsen and Rout (see \cite{James}), about 
diagonal-preserving isomorphism between Steinberg algebras, to present results regarding diagonal preserving isomorphisms of partial skew group ring over commutative algebras. 

To complete the interplay between Steinberg algebras and partial skew rings, we show an “algebraisation” of [Theorem~3.3.1]\cite{Paterson} and [Theorem 8.1]\cite{Quigg}: Any Steinberg algebra (associated to a Hausdorff ample groupoid) can be seen as a partial skew inverse semigroup ring (partial skew inverse semigroup rings generalize partial skew group rings and were defined by Exel and Vieira in \cite{Vieira}, based on work by Nándor Sieben, see \cite{Sieben}. It is interesting to point out that the definition of a partial skew inverse semigroup ring involves a quotient by a certain ideal, which is motivated by the C*-algebraic definition of crossed products by inverse semigroups. For the reader not acquainted with the work of Exel, Paterson, Sieben, et al., on the C*-algebraic level, this quotient might seem artificial, and maybe even unnecessary. Our characterization of Steinberg algebras as partial skew inverse semigroup rings is further evidence that the quotient is necessary in the definition of partial skew inverse semigroup rings.




The organization of this paper is the following. In section 2 we set up the notation and conventions that we use through out the paper. Next, in section 3, we show that the Steinberg algebra of the transformation groupoid associated to a partial action of a discrete group $G$ on a Hausdorff, locally compact, totally disconected space $X$, is isomorphic to $\Lg.$ In section 4, we apply the isomorphism of Section~3 to characterize diagonal preserving isomorphisms of partial skew group rings over commutative algebras.  In section 5, we realize the Steinberg algebra associated to a Hausdorff ample groupoid as a partial skew inverse semigroup ring.

\section{Preliminaries}

\subsection{Inverse semigroups}
An \emph{inverse semigroup} is a nonempty set $S$ equipped with a binary associative operation (i.e., $S$ is a semigroup) such that, for each element $s \in  S,$ there exists a unique element $t \in S$ such that
$$ sts=s \,\,\,\, \mbox{and} \,\,\,\, tst=t.$$
We set $t = s^*.$ Notice that every group is an inverse semigroup with $s^*=s^{-1}.$ 

Given an inverse semigroup $S,$ one may prove that the collection of idempotent elements in $S,$ namely
$$ E(S) = \{ s \in S \, : \, s^2=s \},$$
is a commutative subsemigroup. It is immediate that if $e \in E(S)$  then $e^*=e^{-1}$, and hence $e$ can be thought of as a "projection". Under the partial order relation defined in E(S) by
$$e \leq f  \,\,\,\, \Leftrightarrow  \,\,\,\,  ef = e  \,\,\,\, \forall \, e, f \in  E(S),$$
$E(S)$ becomes a semilattice, meaning that for every $e$ and $f$ in $E(S),$ there exists a largest element which is smaller that $e$ and $f,$ namely $ef.$

There is also a natural partial order relation defined on $S$ itself by
\begin{equation}\label{semigrouporder}
s \leq t  \,\,\, \Leftrightarrow \,\,\, s= ts^*s \,\,\, \Leftrightarrow \,\,\, s=ss^*t \,\,\,\, \forall \, s, t \in S.
\end{equation}
This is compatible with the multiplication operation in the sense that
$$ s \leq t \,\,\, \mbox{and} \,\,\, u \leq v \,\,\, \Rightarrow \,\,\, su \leq tv. $$

Inverse semigroups are most easily thought of in terms of partial symmetry maps. Indeed, if $X$ is a set, then the set $I(X)$ of all partial symmetry maps on $X$ (i.e. the set of bijetive maps between subsets of $X$) is an inverse semigroup in the natural way. The product of two such maps is just the composition of the two, defined wherever it makes sense, and the
involution operation is given by inversion. The classical Wagner-Preston Theorem, asserts that any inverse semigroup can be realized as an inverse subsemigroup of some $I(X).$

\subsection{Groupoids}
A \emph{groupoid} consists of sets $\G$ and $\Gs  \subseteq \G \times \G$ such that there is a map 
$$ \Gs \ni  (b, c) \mapsto bc \in  \G, $$
and an involution 
$$ \G \ni b \mapsto b^{-1} \in \G, $$
such that the following conditions hold:
\begin{enumerate}[{\rm (a)}]
\item if $(b, c)$ and $(c, d)$ are in $\Gs$ then so are  $(bc, d)$ and  $(b,cd)$, and the equation  $(bc)d = b(cd)$ holds; 
\item for all $b \in \G$ the pair $(b, b^{-1}) \in \Gs$ and, if $(b, c) \in \Gs$ then $b^{-1}(bc) = c$ and $(bc)c^{-1}=b.$
\end{enumerate}

Associated to a groupoid there are two maps, called \emph{range} and \emph{source}, which are defined from $\G$ to itself by $r(b) = bb^{-1}$ and $s(b) = bb^{-1}$, respectively. We call the common image of $r$ and $s$ the \emph{unit space} of $\G$ and denote it by $\Go$. Note that $(b,c) \in \Gs$ if, and only if, $s(b)=r(c).$

For $B, C \subseteq \G,$ we define
\begin{equation}\label{bisectionproduct}
 BC = \{bc \in \G \, : \,  b \in B, c \in C \ \mbox{and} \ s(b)=r(c) \},
\end{equation}
and
\begin{equation}\label{bisectioninverse}
B^{-1}= \{ b^{-1} \, : \, b \in B \}.
\end{equation}

We call $\G$ a \emph{topological groupoid} if $\G$ is endowed with a topology such that composition and inversion are continuous. 

An \emph{isomorphism} of topological groupoids $\phi:\G \to \mathcal{H}$ is a homeomorphism from $\G$ to $\mathcal{H}$ that carries units to units, preserves the range and source maps, and satisfies $\phi(ab) = \phi(a)\phi(b)$ for all composable pairs $(a,b) \in \G^2$. The uniqueness of inverses implies that $\phi(a^{-1})=\phi(a)^{-1}$ for $a \in \G$.

A topological groupoid $\G$ is said to be \emph{\'{e}tale} if $\Go$ is locally compact and Hausdorff in the relative topology, and its range map is a local homeomorphism from $\G$ to $\Go$ (the source map will consequently share this property). 

A \emph{bisection} in $\G$ is an open subset $B \subseteq \G$ such that the restrictions of $r$ and $s$ to $B$ are both injective.

It is easy to see that the topology of an \'{e}tale groupoid admits a basis formed by  bisections. Also, for \'{e}tale groupoids, $\Go$ is an open bisection in $\G.$ If, in addition, $\G$ is Hausdorff, then $\Go$ is also closed in $\G.$

An \'{e}tale groupoid $\G$ is said to be \emph{ample} if it has a basis of compact open bisections. Using the notation of \cite{Paterson}, we denote by $\Ga$ the set of all the compact bisections of $\G.$ 

Let $\G$ be an \'{e}tale groupoid. If $\Go$ is totally disconnected, by \cite[Proposition~4.1]{Exel2}, $\G$ is ample. Note that the converse is true if we add the hypothesis that $\G$ is Hausdorff.



\begin{remark}
In this paper we only consider ample Hausdorff groupoids.
\end{remark} 

Let $\G$ be a Hausdorff ample groupoid.  An interesting example of an inverse semigroup, which we will use in this paper, is the set $\Ga$ consisting of all the compact bisections of $\G.$ The operations are defined as in (\ref{bisectionproduct}) and (\ref{bisectioninverse}). More precisely, we have the following proposition. 

\begin{prop}\label{bisectsetsemigroup}
Let $\G$  be a Hausdorff ample groupoid and let $ B, C, D \in \Ga.$ Then, 
\begin{enumerate}[{\rm (i)}]
\item $B^{-1}$ is a compact bisection;
\item $BC$ is a compact bisection (possibly empty);
\item $(BC)D=B(CD);$ 
\item $BB^{-1}B=B$ and $B^{-1}BB^{-1}=B^{-1};$
\item if there exist a compact bisection $C$ such that  $BCB=B$ and $CBC=C,$ then $C=B^{-1}.$
\end{enumerate} 
\end{prop}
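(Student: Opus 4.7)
My plan is to treat all five items as consequences of two recurring themes: inversion is a homeomorphism of $\G$ that swaps $r$ and $s$, and a bisection forces $r$ and $s$ to be injective there, so any coincidence of ranges or sources on a bisection collapses the corresponding elements. With these in hand, (i) is immediate: $B^{-1}$ is the image of the compact open set $B$ under the homeomorphism $b\mapsto b^{-1}$, and the bisection condition transfers because $r|_{B^{-1}}$ corresponds to $s|_B$ and vice versa. Item (iii) is then a pointwise consequence of associativity in $\G$ applied to each defined triple, once (ii) ensures that the set-theoretic products are well-behaved.

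The main technical work lies in (ii). For compactness, I would write $BC$ as the continuous image of $(B\times C)\cap \Gs$, and observe that this latter set is closed in the compact product $B\times C$ because $\Go$ is Hausdorff and $r,s$ are continuous. The bisection property is a direct bookkeeping: if $b_1c_1,\,b_2c_2\in BC$ share range, then $r(b_1)=r(b_2)$ forces $b_1=b_2$ in $B$, after which $r(c_1)=s(b_1)=s(b_2)=r(c_2)$ forces $c_1=c_2$ in $C$; injectivity of source on $BC$ is symmetric. Openness is the delicate step: given $bc\in BC$, I would pick compact open bisection neighborhoods $U_0\subseteq B$ of $b$ and $V_0\subseteq C$ of $c$, take $W=s(U_0)\cap r(V_0)\subseteq \Go$ (compact open since $s$ and $r$ are local homeomorphisms and $\Go$ is Hausdorff), and set $U=U_0\cap s^{-1}(W)$, $V=V_0\cap r^{-1}(W)$. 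Then $s(U)=r(V)=W$, so every element of $U$ composes with exactly one element of $V$, and $UV$ is a compact open bisection contained in $BC$ and containing $bc$. The main obstacle is verifying that this shrinking preserves both compactness and openness of the pieces; this is where the ample hypothesis and Hausdorffness jointly intervene.

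For (iv), the inclusion $B\subseteq BB^{-1}B$ is immediate from $b=b\,b^{-1}\,b$. Conversely, any element $b_1b_2^{-1}b_3\in BB^{-1}B$ with the products defined satisfies $s(b_1)=s(b_2)$, forcing $b_1=b_2$ in $B$, and then $r(b_2)=r(b_3)$ forces $b_2=b_3$, so the triple collapses to $b_3\in B$; the identity $B^{-1}BB^{-1}=B^{-1}$ is symmetric. For (v), rather than invoking general inverse-semigroup theory, I would argue directly: given $c\in C$, the equation $C=CBC$ together with the bisection property of $C$ forces $c=cbc$ for some unique $b\in B$ (by the same range/source collapse as above); left-multiplying by $c^{-1}$ yields $bc=s(c)$, whence $b=c^{-1}$ and so $c=b^{-1}\in B^{-1}$, giving $C\subseteq B^{-1}$. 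The reverse inclusion follows symmetrically from $BCB=B$, so $C=B^{-1}$. As a sanity check, one could alternatively observe that an idempotent bisection $E=E^2$ must lie in $\Go$, and multiplication of subsets of $\Go$ reduces to intersection, which is commutative, so $\Ga$ satisfies the standard inverse-semigroup characterization and inverses are automatically unique.
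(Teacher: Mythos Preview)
Your proposal is correct, and it goes well beyond what the paper does: the paper's entire proof is the single line ``See \cite[Proposition~2.2.3]{Paterson},'' so you are supplying a self-contained argument where the authors simply defer to an external reference. Your treatments of (i), (iii), (iv), and (v) are clean; the repeated use of the range/source collapse on a bisection is exactly the right mechanism, and your direct computation for (v) is neater than appealing to abstract inverse-semigroup uniqueness (though your sanity check that idempotent bisections lie in $\Go$ and multiply by intersection is also a valid route). The one step that deserves an extra sentence is openness of $BC$ in (ii): your shrinking to arrange $s(U)=r(V)=W$ is useful, but $UV$ is still a product of bisections, so its openness is not yet automatic. What the arrangement actually buys you is that $w\mapsto (s|_U)^{-1}(w)\cdot (r|_V)^{-1}(w)$ defines a continuous section of the local homeomorphism $r$ over the open set $r(U)$ (after precomposing with $r|_U\circ (s|_U)^{-1}$), and continuous sections of local homeomorphisms have open image; that closes the argument. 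Note that this uses only \'etaleness, whereas ampleness and Hausdorffness are needed for the compactness assertions, not for openness. With that small clarification, your direct proof stands on its own and trades the paper's brevity for genuine self-containment.
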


\begin{proof} 
See \cite[Proposition~2.2.3]{Paterson}.
\end{proof}

\begin{remark} Notice that in the case above $A^*=A^{-1}$. Furthermore, if $\Go$ is compact then the inverse semigroup $\Ga$ has a unit.   
\end{remark}

For each unit $u \in \Go$ we denote $\G_u = \{ b \in \G \ : \ s(b)=u\}, \G^u:= \{ b \in \G \ : \ r(b)=u \}$ and $\G_u^u =\G_u \cap \G^u =\{b \in \G \ : \ r(b)=s(b)=u \}.$  The \emph{isotropy group} at a unit $u$ of $\G$ is the group $\G_u^u$. We say that $u$ has \emph{trivial isotropy} if $\G_u^u= \{u\}.$ A topological groupoid $\G$ is called \emph{topologically principal} if the set of points in $\Go$ with trivial isotropy is dense in $\Go.$




\subsection{Steinberg algebras}
Suppose that $\G$ is a Hausdorff, ample groupoid and $R$ is a commutative ring with unit. Let $A_R(\G)$ be the set of all functions from $\G$ to $R$ that are locally constant and have compact support. Notice that if $f \in A_R(\G)$ then
$\supp(f)=\{b \in G \ : \ f(b)\neq 0\}$ is clopen. Furthermore, if $f: \G \rightarrow R$ is locally constant then it is continuous with respect to any topology on $R.$

We define addition in $A_R(\G)$ pointwise and, for $f, g \in A_R(\G)$, convolution is defined by
\begin{equation}
(f * g)(b) = \sum_{r(c)=r(b)} f(c)g(c^{-1}b) = \sum_{cd=b}f(c)g(d)
\end{equation}
With this structure, $A_R(\G)$ is an associative algebra called the \emph{Steinberg Algebra} associated to $\G.$

Every element of $A_R(\G)$ is a linear combination of characteristic functions of pairwise disjoint compact bisections (see  \cite[Proposition~4.3]{Steinberg}). It is interesting to notice that $1_B1_C = 1_{BC},$ whenever $B$ and $C$ are compact open bisections in $\G$ (see \cite[Proposition~3.5(3)]{Steinberg}).

By \cite[Lemma~2.6]{Clark2}, the family of all idempotent elements of $A_R(\Go)$ is a set of local units for $A_R(\G).$ Moreover by \cite[Proposition~3.12.]{Steinberg}, $A_R(\G)$ has an unity if, and only if, $\Go$ is compact.

We write $D_R(\G):= A_R(\Go)$ for the commutative algebra of locally constant, compactly supported, functions from $\Go$ to $R$ with pointwise operations. Since $\Go$ is clopen there is an embedding 
\begin{equation}\label{embeddingdiagonal}
i : A_R(\Go)\rightarrow A_R(\G)
\end{equation}
such that $i(f)|_{\G \setminus \Go} =0.$ With this embedding we regard $D_R(\G)$ as a commutative subalgebra of $A_R(\G)$ and call it the \emph{diagonal} of $A_R(\G).$ When $R$ is an integral domain, we have that $D_R(\G)= span_R\{1_U \ : \ U \subseteq \Go \, \mbox{is compact open}\}.$ For ample Hausdorff groupoids $\G_1$ and $\G_2,$ we say that an isomorphism $\phi: A_R(\G_1) \to A_R(\G_2)$ is \emph{diagonal-preserving} if $\phi(D_R(\G_1))=D_R(\G_2).$

\subsection{Partial actions of inverse semigroups}\label{PAISG}

Let $S$ be an inverse semigroup and let $X$ be a set. A \emph{partial action} of $S$ on $X$ is a partial homomorphism  $\pi : S \rightarrow I(X),$ where $I(X)$ denotes the symmetric inverse semigroup of $X$ (i.e., the inverse semigroup of all partial bijections on $X$). In other terms, $\pi : S \rightarrow I(X)$ is a partial action if, for each $s, t \in S$ we have that 
\begin{center}
$ \pi(s)\pi(t)\pi(t^*)  = \pi(st)\pi(t^*), \,\,\,\, \pi(s^*)\pi(s)\pi(t)=\pi(s^*)\pi(st)$
$ \mbox{and}  \,\,\,\, \pi(s)\pi(s^*)\pi(s)=\pi(s). $
\end{center}

In our work we will  use the characterization of a partial action of an inverse semigroup given in \cite[Proposition~3.5]{Buss}: \\




\begin{definition}\label{acaosemigroup2} 
Let $X$ be a set and $\pi : S \rightarrow I(X)$ be a map. For each $s \in S,$ let $X_s$ be the range of $\pi_s.$ Then $\pi$ is a  partial action of $S$ on $X$ if, and only if, for all $s, t \in S,$  the following holds:

\begin{enumerate}[{\rm (i)}]
\item\label{A1} $\pi_s^{-1}=\pi_{s^*}$  (in particular this implies that the domain of $\pi_s$ coincides with the range of $\pi_{s^*},$  namely $X_{s^*});$
\item\label{A2} $\pi_s(X_{s^*} \cap X_t ) \subseteq X_s \cap X_{st};$ 
\item\label{A3} if $s \leq t,$ then $X_s \subseteq X_t,$
\item\label{A4} for all $x \in X_{t^*}\cap X_{t^*s^*}$, it holds that $\pi_s(\pi_t(x))=\pi_{st}(x).$
\end{enumerate}
\end{definition}

By Proposition~3.4 in \cite{Buss} we can replace inclusion in (\ref{A2}) by the equality 
\begin{equation}\label{eqaction}
\pi_s(X_{s^*} \cap X_t )=X_s \cap X_{st},
\end{equation}
and discard (\ref{A3}).

An inverse semigroup $S$ may partially act on a topological space, an algebra, a C*-algebra, among other objects. For each situation we add some conditions in the definition of a partial action to be in accordance with the characteristics of each category. For example, for a partial action $\pi$ of an inverse semigroup $S$ into a $R$-algebra $A,$ we assume that every $S,$ $(s \in S),$ is a bilateral ideal of $A$, and that each application $\pi_s: X_{s^*} \rightarrow X_s$ is an isomorphism of algebras. In addition, if the inverse semigroup $S$ has a unit 1, we assume that $X_1 = A$ and $\pi_1$ is the identity application of $A.$ In the case that $\pi $ is a partial action of $S$ in a topological space $X,$ we also require that, for each $s \in S, $ the subsets $X_s$ are open, and that each application $\pi_s: X_{s^*} \rightarrow X_s $ is a homeomorphism of topological spaces. In the same way as in the above case, if $S$ has a unit, we assume that $X_1 = X$ and $\pi_1$ is the identity map in $X.$

It well known that given a partial action $\theta$ of a group $G$, in a locally compact Hausdorff topological space $X$, there is an associated partial action $\alpha$ of $G$ in $C_0(X)$ (see \cite[Corollary~11.6]{ExelPartial}). Similarly we have the following result:

Given an inverse semigroup $S,$ a Hausdorff, locally compact, totally disconnected topological space $X,$ and a partial action $\theta: S \rightarrow I(X),$  such that each $X_s$ is clopen, there exist a partial action  $\alpha: S \rightarrow I(\Lc),$ where $\Lc$ is the $R$-algebra of the locally constant functions of $X$ in $R$ with compact support. Such action is constructed in the following way:

For each $s \in S,$ define 
  $$D_s:= \{ f \in \Lc \, : \, f(x)=0, \, \forall \, x  \in X \setminus X_s  \}.$$
It is easy to see that $D_s$ is a bilateral ideal of $\Lc.$ Note that  $D_s$ can be identified with $\Lcs.$ Now, let
$\alpha_s: D_{s^*} \rightarrow D_s$ be given by 
  $$ \alpha_s(f)(x)= \left\lbrace \begin{array}{ccr}
    f(\theta_{s^*}(x)), & & \mbox{se} \,\, x \in X_s \\
    0,                       & & \mbox{se} \,\, x \notin X_s  
    \end{array}\right..
  $$

\subsection{Partial skew inverse semigroup rings}\label{sectionskew}

In this section we recall the concepts related with partial skew inverse semigroup rings, as defined in \cite{Vieira}.

\begin{definition} Let $K$ be a commutative ring with unit, and let $\pi$ be a partial action of the inverse semigroup $S$ in the $R$-algebra $A.$ For each $s \in S,$ let $D_s$ be the range of $\pi_s.$ Consider $\mathcal{L}$ the $K$-module of all finite sums of the form
 $$ \sum_{s \in S}^{finite} a_s \delta_s,  \,\,\, a_s \in D_s ,$$
with product being the linear extension of
\begin{equation}\label{multiplicacaoL}
(a_s \delta_s)(a_t \delta_t) = \alpha_s(\alpha_{s^*} (a_s)a_t)\delta_{st}.
\end{equation}
\end{definition}

Notice that  $\pi_s(\pi_{s^*} (a_s)a_t)\delta_{st}$ is an element in $D_{st}$ since
  $$\pi_s(\pi_{s^*} (a_s)a_t) \in \pi_s(D_{s^*}\cap D_t)=D_s \cap D_{st}.$$
Hence the product is well defined and $\mathcal{L}$ is a $K$-algebra (not necessarily associative).

\begin{remark}In the case of a group action, the $K$-algebra $\mathcal{L}$ is exactly the partial skew group ring. In \cite{Dokuchaev1}, Dokuchaev and Exel prove under which conditions the partial skew group ring $A \rtimes G$ is associative. Analogously results are proved in \cite{Vieira} for $\mathcal{L}$ (and consequently for $\Skew$ defined below).
\end{remark}


\begin{definition} Let $\pi = ( \{\pi_s\}_{s \in S}, \{D_s\}_{s \in S})$ be a partial action of an inverse semigroup $S$ on a $K$-algebra $A$ and let $\mathcal{I} =  \left\langle  a\delta_s-a\delta_t \ : \ a \in D_s,  s \leq t \right\rangle,$ that is, $I$ is the ideal generated by $a\delta_s-a\delta_t.$  We define the \emph{partial skew inverse semigroup ring} of $\pi$ as 
 $$\Skew = \dfrac{\mathcal{L}}{\mathcal{I}}.$$
We denote the elements of $\Skew$ by $\overline{a_s\delta_s},$ where  $a_s\delta_s \in \mathcal{L}.$
\end{definition}

Note that by Proposition~\ref{acaosemigroup2}~(\ref{A3}), $s \leq t$ implies $D_s \subseteq D_t.$ Thus, if $s \leq t$ and $a \in D_s$ then $\overline{a\delta_s}= \overline{a\delta_t}.$







\begin{remark} Notice that if the inverse semigroup $S$ is a group then the order relation coincides with equality, and hence the ideal $\mathcal{I} = \{0\}$. Therefore partial skew inverse semigroup generalize partial skew group rings. However, the more general structure of partial skew inverse semigroup rings do not carry the graded structure present in the group case. 
Instead, we only have that every partial skew inverse semigroup ring admits a pre-grading (defined below) over the semigroup. 
\end{remark}

\begin{definition} 
Let $B$ be any $K$-algebra and let $S$ be an inverse semigroup. A pre-grading of $B$ over $S$ is a family of linear subspaces $\{B_s\}_{s \in S}$ of $B,$ such that for every $s, t \in S$ one has that
\begin{enumerate}
\item $B_sB_t \subseteq B_{st},$
\item if $s \leq  t$  then $B_s \subseteq B_t,$
\item $B$ is the linear span of the union of all $B_s.$
\end{enumerate}
\end{definition}

\section{The Steinberg algebra of a transformation groupoid}

In this section we prove that partial skew group rings of the form $\Lg$ can be realized as Steinberg algebras. This result is attributed to folklore and we provide a proof of it here. Throughout we assume that $R$ is a commutative ring with unit, and $X$ is a Hausdorff, locally compact, totally disconnected topological space. 

Let $\Lc$ be the set of all locally constant, compactly supported, $R$-valued functions on $X$. Notice that $\Lc$ is a commutative $R$-algebra, with point-wise addition and multiplication. Furthermore, $\Lc$ has an unit if, and only if, $X$ is compact. For $f \in \Lc,$ we define the support of $f$ by $\supp(f)= \{x \in X \ :  \ f(x)\neq 0\}$ (notice that this is a clopen set).

Notice that the topology of $X$ admits a basis formed  by  subsets which are simultaneously compact and open (compact-open). Given any compact-open set $K \subseteq X$,  its characteristic function, which we denote by $1_K,$ is locally constant and compactly supported. Moreover, one may prove that every locally constant, compactly supported function $f : X \rightarrow R$ is a linear combination of the form 
$$f= \sum_{i=1}^n r_i1_{K_i},$$
where the $K_i$ are pairwise disjoint compact-open subsets, and each $r_i$ lies in $R.$

Now, let $\theta=(\{X_g\}_{g \in G}, \{\theta_g\}_{g \in G})$ be a partial action of a discrete group $G$ on $X,$ such that $X_g$ is clopen (closed-open) for every $g$ in $G.$ Such action induces an action in the algebra level, as done in \cite{Beuter} and \cite{Dokuchaev}: For each $g$ in $G,$ consider the ideal $D_g:= \{ f \in \Lc \, :  \, f \ \mbox{ vanishes on } X\setminus X_g \}$ in $\Lc,$ and define $\alpha_g: D_{g^{-1}} \rightarrow D_g$ by setting $\alpha_g(f)=f\circ \theta_{g^{-1}},$ for all $ f \in D_{g^{-1}}.$ Then the collection 
\begin{equation}
\alpha := ( \{D_g\}_{g \in G}, \{\alpha_g\}_{g \in G} )
\end{equation}
is an algebraic partial action of $G$ on $\Lc.$ 

Associated to the above partial action we consider the partial skew group ring 
\begin{equation}
\Lg.
\end{equation}
A general element $b \in \Lg$ is denoted by 
 $$b=\sum_{g \in G}f_g\delta_g,$$
where each $f_g$ lies in $D_g,$  and $f_g \equiv 0,$ for all but finitely many group elements $g.$ 


We can also associate to an action $\theta$ as above an \'{e}tale groupoid, denoted by $\Gx,$ and known as the \emph{transformation grupoid}  (in what follows we use the same simplified notation of the transformation groupoid as in \cite{Abadie}): Let
 $$\Gx:= \{(t,x) \, : \, t \in G \,\, \mbox{and} \,\, x \in X_t \}.$$
The inverse of $(t, x) \in \Gx$ is 
$$(t, x)^{-1}=(t^{-1}, \theta_{t^{-1}}(x)).$$
If $(s,y), (t,x) \in \Gx$ then $(s,y), (t,x) $ is a composable pair if, and only if, $\theta_{s^{-1}}(y)=x.$ In this case we have
$$(s,y)(t,x)=(st, y).$$

We equip $\Gx$ with the topology inherited from the product topology on $G \times X.$  By the continuity of $\theta$ the
operations of inversion and product on $\Gx$ are continuous. Since $G$ is discrete and $X$ is Hausdorff we have that $\Gx$ is Hausdorff. 

Notice that the range and source maps $r: \Gx \rightarrow  (\Gx)^{(0)}$ and $s : \Gx \rightarrow (\Gx)^{(0)}$ are given by $r(t,x) =(1,x)$, and $s(t,x)=(1, \theta_{t^{-1}}(x)).$ Moreover, the range is a local homeomorphism of $\{t\} \times X_t$ onto $\{1\} \times X_t$  and the source is local homeomorphism of $\{t\} \times X_t$ onto $\{1\} \times X_{t^{-1}}.$ We identify $X$ with $(\Gx)^{(0)}$ via the homeomorphism $i: X \rightarrow \Gx$ given by $x \mapsto (1, x)$. Thus, $(\Gx)^{(0)}$ is Hausdorff, locally compact and totally disconnected, and $\Gx$ is \'{e}tale. Furthermore, by \cite[Proposition~4.1]{Exel2}, $\Gx$ is ample and Hausdorff and we may consider its Steinberg algebra, $A_R(\Gx)$.

\begin{remark} Observe that the convolution product in $A_R(\Gx)$ is defined by 
\begin{align*}
f_1*f_2(t,x) & = \sum_{(s,x)\in \Gx}f_1(s,x)f_2((s,x)^{-1}(t,x)) \\
             & = \sum_{(s,x)\in \Gx}f_1(s,x)f_2(s^{-1}t,\theta_{s^{-1}}(x)),
\end{align*}
for all $f_1, f_2 \in A_R(\Gx)$ and $(t,x) \in \Gx.$
\end{remark}

We can now prove the following.

\begin{theorem}\label{theorisomortransfgrou}
Let $\theta=(\{X_g\}_{g \in G}, \{\theta_g\}_{g \in G})$ be a partial action of a discrete group $G$ over a locally compact, Hausdorff, totally disconnected topological space $X$, such that each $X_g$ is clopen. Let $(\{D_g\}_{g \in G}, \{\alpha_g\}_{g\in G})$ be the corresponding partial action (as defined above) and $\Gx$ be the transformation groupoid associate with $\theta.$ Then $\Lg$ and $A_R(\Gx)$ are isomorphic as $R$-algebras. 
\end{theorem}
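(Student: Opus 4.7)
The natural approach is to build an explicit isomorphism $\Phi : \Lg \to A_R(\Gx)$. The key geometric observation is that, because $G$ is discrete and each $X_g$ is clopen in $X$, the transformation groupoid decomposes as a topological disjoint union
\[
\Gx \;=\; \bigsqcup_{g \in G} \{g\} \times X_g,
\]
with each slice $\{g\} \times X_g$ clopen in $\Gx$ and homeomorphic to $X_g$. Under the natural identification $X_g \hookrightarrow \Gx$, $y \mapsto (g,y)$, any function supported on $\{g\} \times X_g$ corresponds to a function on $X$ that vanishes outside $X_g$, i.e.\ an element of $D_g$. This suggests defining, for $f \in D_g$,
\[
\Phi(f\delta_g)(t,x) \;=\; \begin{cases} f(x), & t = g, \\ 0, & t \neq g, \end{cases}
\]
and extending $R$-linearly.

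The plan is as follows. First, I would check that $\Phi(f\delta_g)$ lies in $A_R(\Gx)$: since $f$ is locally constant and compactly supported on $X$, and $\{g\} \times X_g$ is clopen in $\Gx$, the function $\Phi(f\delta_g)$ inherits these properties. Linearity is immediate. Second, I would verify that $\Phi$ is injective and surjective using the disjoint-union decomposition above: any $F \in A_R(\Gx)$ has compact support, hence meets only finitely many slices $\{g\} \times X_g$, and restricting $F$ to each slice and transporting back to $X$ produces functions $f_g \in D_g$ (locally constant and compactly supported, vanishing outside $X_g$) with $F = \sum_g \Phi(f_g \delta_g)$; injectivity is clear because $\Phi(f\delta_g)$ determines $f$ by restriction to the slice $\{g\} \times X_g$.

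The main step, and the one I expect to take the most care, is verifying multiplicativity. Fix $f \in D_g$ and $h \in D_s$. On the partial-skew-ring side,
\[
(f\delta_g)(h\delta_s) \;=\; \alpha_g\bigl(\alpha_{g^{-1}}(f)\,h\bigr)\,\delta_{gs},
\]
which evaluated at $x$ is $f(x)\,h(\theta_{g^{-1}}(x))$ whenever $x \in X_g \cap X_{gs}$ and zero otherwise (using $\alpha_g(k)(x)=k(\theta_{g^{-1}}(x))$ on $X_g$ and the fact that $\theta_{g^{-1}}(x) \in X_{g^{-1}}\cap X_s$ iff $x \in X_g \cap X_{gs}$, by equation (\ref{eqaction}) applied to the group action). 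On the Steinberg side,
\[
\bigl(\Phi(f\delta_g) * \Phi(h\delta_s)\bigr)(t,x) \;=\; \sum_{(r,y)\in\Gx} \Phi(f\delta_g)(r,y)\,\Phi(h\delta_s)\bigl(r^{-1}t,\theta_{r^{-1}}(y)\bigr),
\]
and the only possibly nonzero summand occurs when $r = g$, $y = x$, and $r^{-1}t = s$, i.e.\ $t = gs$, yielding $f(x)\,h(\theta_{g^{-1}}(x))$ subject to the domain conditions $x \in X_g$ and $\theta_{g^{-1}}(x) \in X_s$. Comparing, the two expressions agree pointwise, which establishes $\Phi((f\delta_g)(h\delta_s)) = \Phi(f\delta_g)*\Phi(h\delta_s)$, and multiplicativity follows by bilinearity.

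The main obstacle is organizing the pointwise comparison in the multiplicativity step so that the two descriptions of the common domain, $X_g \cap X_{gs}$ on the partial-skew-ring side versus $\{x \in X_g : \theta_{g^{-1}}(x) \in X_s\}$ on the convolution side, are seen to coincide; this is exactly the content of $\theta_g(X_{g^{-1}} \cap X_s) = X_g \cap X_{gs}$, the group-action analogue of equation (\ref{eqaction}). Everything else is bookkeeping.
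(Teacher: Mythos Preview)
Your proposal is correct and follows essentially the same approach as the paper: the map $\Phi$ you define coincides with the paper's $\rho$, the well-definedness and multiplicativity checks are the same pointwise computations, and your bijectivity argument via the slice decomposition $\Gx = \bigsqcup_g \{g\}\times X_g$ is just a repackaging of the explicit inverse $\rho^{-1}(f)=\sum_g f_g\delta_g$ that the paper writes down. The one place you are slightly more careful than the paper is in isolating the domain identity $\theta_g(X_{g^{-1}}\cap X_s)=X_g\cap X_{gs}$ needed for the multiplicativity step; the paper leaves this implicit in its chain of equalities.
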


\begin{proof}
To define a homomorphism $\rho$ of $\Lg$ to $A_R(\Gx),$  we begin defining it in elements of the form $f_g\delta_g$, and then we extend it linearly to $\Lg.$ More precisely, for $f_g\delta_g \in \Lg$ and $(t,x) \in \Gx,$ let
 $$\tilde{\rho}(f_g\delta_g)(t,x):= \left\lbrace\begin{array}{ccl}
 f_g(x), & & \mbox{if} \,\, t=g \\
 0 , & & \mbox{otherwise},
 \end{array}  \right.$$
and denote the linear extension of  $\tilde{\rho}$ to $\Lg$ by $\rho.$

We claim that $\rho$ is well defined. For this it is enough to prove that, for each $g\in G$, $\tilde{\rho}(f_g\delta_g)$ is well defined, that is, it is a locally constant function with compact support. Let $(t,x) \in \Gx.$ Suppose that $t=g.$ Since $f_g$ is locally constant, there is a neighborhood $U \subseteq X$ of $x$ such that $ f|_U$ is constant. Note that
$$V:= (\{g\} \times U) \cap (\Gx)$$
is a neighborhood of $(t,x)$ and $\tilde\rho(f_g\delta_g)|_V=f_g|_U,$ which is constant. 
Now, suppose that $t \neq g.$ Then 
 $$W:=((G \setminus \{g\}) \times X ) \cap (\Gx)$$
is a neighborhood of $(t,x)$ in $\Gx$ and $\tilde\rho(f_g\delta_g)|_W \equiv 0.$ So $\tilde{\rho}(f_g\delta_g)$ is locally constant.  We easily see that 
 $$\mbox{supp}(\tilde\rho(f_g\delta_g))=\{g\} \times \mbox{supp}(f_g),$$
which is compact. We concluded that  $\tilde\rho(f_g\delta_g) \in A_R(\Gx).$
 
Next we check that $\rho$ is multiplicative. By linearity, it is enough to check this for elements of the form $f_g\delta_g \in \Lg.$ So, let $f_g\delta_g$ and $f_h\delta_h \in \Lg$ and $(t,x) \in \Gx.$ Then,

\begin{align*}
\rho((f_g\delta_g)(f_h\delta_h))(t,x) & = \rho(\alpha_g(\alpha_{g^{-1}}(f_g)f_h)\delta_{gh})(t,x) \\
                                      & = \left\lbrace\begin{array}{cl}
                                               \alpha_g(\alpha_{g^{-1}}(f_g)f_h)(x),  & \mbox{if} \,\, t=gh \\
                                               0, & \,\,  \mbox{otherwise}
                                           \end{array}  \right. \\
                                      & = \left\lbrace\begin{array}{cl}
                                               \alpha_{g^{-1}}(f_g)f_h(\theta_{g^{-1}}(x)),  & \mbox{if} \,\, t=gh \\
                                               0, & \,\,  \mbox{otherwise}
                                           \end{array}  \right. \\
                                      & = \left\lbrace\begin{array}{cl}
                                              \alpha_{g^{-1}}(f_g)(\theta_{g^{-1}}(x))f_h(\theta_{g^{-1}}(x)), & \mbox{if}\,\, t=gh \\
                                              0, & \,\,  \mbox{otherwise}
                                          \end{array}  \right. \\
                                      & = \left\lbrace\begin{array}{cl}
                                               f_g(x)f_h(\theta_{g^{-1}}(x)), & \mbox{if} \,\, h=g^{-1}t \\
                                               0, & \,\,  \mbox{otherwise}                                               
                                           \end{array}  \right. \\ 
                                      & = f_g(x)\rho(f_h \delta_h)(g^{-1}t, \theta_{g^{-1}}(x)) \\
                                      & = \sum_{(s,x) \in \G}\rho(f_g\delta_g)(s,x)\rho(f_h\delta_h)(s^{-1}t,\theta_{s^{-1}}(x))\\
                                      & = \rho(f_g\delta_g)* \rho(f_h\delta_h)(t,x).                                                                                         
\end{align*}
Therefore $\rho$ is a homomorphism.

To finish we prove that $\rho$ is a bijection, by showing that it has an inverse $\rho^{-1}: A_R(\Gx) \rightarrow \Lg$ given by $\rho^{-1}(f)=\sum f_g\delta_g,$  where
 $$f_g(x):= \left\lbrace \begin{array}{lcl}
 f(g, x), & & \mbox{if} \,\,\,\, x \in X_{g} \\
 0, & & \mbox{otherwise.}
 \end{array} \right.$$

We claim that $\rho$ is well defined. For this we need first to prove that $f_g \in D_g.$

Since the topology of $\Gx$ is the relative product topology from  product topology of $G \times X$ and  $\supp(f)$ is compact, we have  $\supp(f)=(\{g_1, \cdots , g_n \} \times K) \cap (\Gx),$ where $\{g_1, \cdots , g_n\} \subseteq G$ and $K \subset X$ is compact. Thus $\supp(f_g)= K \cap X_g$ is compact.

If $x \notin X_g$ then $f$ is identically null in the open subset $X \setminus X_g.$ Let $x \in X_g.$ Then there is an open neighborhood $W$ of $(g,x)$ such that $f|_W$ is constant. Notice that $W$ is of the form  $(H \times U)\cap (\Gx), $ where $H$ is open subset of $G$ and $U$ is open subset of $X.$ We have that $V := U \cap X_g$ is an open neighborhood of $x$ and $f_g|_V$ is constant. 

Obviously $f_g(x)=0$ for every $x \notin X_g.$ Therefore $f_g \in D_g.$

Notice that the set $\{g \in G \ : \  f(g,x) \neq 0, \ x \in X_g \}$ is finite because $G$ is discrete and $\supp(f)$ is compact. Then the set $\{g \in G \ : \ f_g \neq 0\}$ also is finite, that is, the sum  $\sum f_g\delta_g = \rho^{-1}(f)$ is finite. 

It is straightforward to check that $\rho^{-1}$ is the inverse of $\rho.$
\end{proof}

\begin{remark} Under the assumptions of the theorem above we remark that partial actions such that each $X_g$ is clopen are exactly the one's for which the envelope space is Hausdorff (see \cite{EGG}).
\end{remark}

\begin{corollary}\label{coroldiagonal}
The isomorphism above establishes an isomorphism between the algebras $\Lc $ and $D_R(\Gx).$ 
\end{corollary}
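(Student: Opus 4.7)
The plan is to verify that the isomorphism $\rho$ from Theorem~\ref{theorisomortransfgrou} carries the natural copy of $\Lc$ inside $\Lg$ onto the diagonal subalgebra $D_R(\Gx)$ of $A_R(\Gx)$, and that its inverse sends $D_R(\Gx)$ back into $\Lc$. Since both statements follow directly from the explicit formulas defining $\rho$ and $\rho^{-1}$, the proof is essentially a matter of correctly identifying the two subalgebras inside their ambient rings and performing a direct computation.

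First I would fix the identifications. Because $1 \in G$ is the identity, we have $X_1 = X$ and hence $D_1 = \Lc$, so $\Lc$ embeds in $\Lg$ as the subalgebra $\{f\delta_1 : f \in \Lc\}$. On the other hand, the unit space is $(\Gx)^{(0)} = \{(1,x) : x \in X\}$, identified with $X$ via $x \mapsto (1,x)$, and via the embedding in \eqref{embeddingdiagonal} the diagonal $D_R(\Gx)$ corresponds to those elements of $A_R(\Gx)$ whose support is contained in $(\Gx)^{(0)}$.

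Next I would verify that $\rho(\Lc\delta_1) \subseteq D_R(\Gx)$. For $f \in \Lc$, the definition of $\rho$ gives $\rho(f\delta_1)(t,x) = f(x)$ if $t = 1$ and $0$ otherwise, so $\supp(\rho(f\delta_1)) \subseteq \{1\} \times \supp(f) \subseteq (\Gx)^{(0)}$, showing that $\rho(f\delta_1)$ lies in $D_R(\Gx)$. For the reverse inclusion I would use the formula for $\rho^{-1}$ derived in the theorem: if $h \in D_R(\Gx)$, then $h(g,x) = 0$ whenever $g \neq 1$, so in the expansion $\rho^{-1}(h) = \sum_{g \in G} f_g \delta_g$ each coefficient $f_g$ with $g \neq 1$ is identically zero; hence $\rho^{-1}(h) = f_1\delta_1 \in \Lc\delta_1$.

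Combining these two inclusions shows that the restriction of $\rho$ to $\Lc\delta_1$ is a bijection onto $D_R(\Gx)$, and since $\rho$ is already an $R$-algebra homomorphism, this restriction is an $R$-algebra isomorphism. There is no real obstacle here: the only subtlety is being careful to treat the two embeddings $\Lc \hookrightarrow \Lg$ and $D_R(\Gx) \hookrightarrow A_R(\Gx)$ consistently with the identification $X \cong (\Gx)^{(0)}$, after which the claim reduces to reading off the definitions of $\rho$ and $\rho^{-1}$.
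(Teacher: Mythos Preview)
Your proof is correct and follows essentially the same approach as the paper: both identify $\Lc$ with $\Lc\delta_1$ and $D_R(\Gx)$ with the functions in $A_R(\Gx)$ supported on the unit space, and then verify that $\rho$ restricts to a bijection between them. The only cosmetic difference is that for the reverse inclusion the paper writes an arbitrary element of $D_R(\Gx)$ as $\sum r_i 1_{\{1\}\times K_i}$ and exhibits the explicit preimage $(\sum r_i 1_{K_i})\delta_1$, whereas you invoke the formula for $\rho^{-1}$ directly; these are equivalent verifications of the same fact.
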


\begin{proof}
Consider 
$$i(A_R(\Gx)^{(0)}): = \{f \in A_R(\Gx) \ : \ \supp(f) \subseteq (\Gx)^{(0)} \}$$ 
the embedding of $D_R(\Gx)= A_r((\Gx)^{(0)})$ in $A_R(\Gx)$ as given in (\ref{embeddingdiagonal}).

It is clear that the restriction of $\rho$ to $\Lc \delta_1$ is an injective homomorphism in $i(A_R(\Gx)^{(0)}).$
For surjectivity, notice that every compact bisection of $\Gx$ contained in $(\Gx)^{(0)}$ is of form $\{1\} \times K,$ where $K$ is a compact subset of  $X.$ Thus, given $f \in i(A_R((\Gx)^{(0)}))$ there exists compacts subset $K_1, \cdots, K_n$ of $X$ and elements $r_1, \cdots, r_n \in R $ such that  $f=\sum_{i=1}^n r_i1_{\{1\}\times K_i}.$ Now, consider  $F:= \left( \sum_{i=1}^n r_i1_{K_i} \right) \delta_1 \in \Lc.$  Then $\rho(F)=f$ and hence $\rho$ is surjective. Therefore we have an isomorphism between $\Lc\delta_1$ and $i(A_R((\Gx)^{(0)})$, and by identifying these sets with $\Lc$ and $D_R(\Gx),$ respectively, we obtain the desired isomorphism.
\end{proof}

\section{Application to diagonal preserving isomorphisms}

In this section we study diagonal preserving isomorphisms of partial skew group rings of the form $\Lg$. For this purpose we apply the isomorphism of the previous section and we make use of results proved in \cite{James} and \cite{Xin}.
We continue to use the same assumptions on $X, G$ and $\theta$ of the previous section. 

By Theorem~\ref{theorisomortransfgrou} and Corollary~\ref{coroldiagonal}, we obtain an isomorphism between the algebras $\Lg$ and $A_R(\Gx)$ that \an{preserves diagonal}. In \cite[Theorem 3.1.]{James}, Carlsen and Rout characterize diagonal preserving (graded) isomorphism between two (graded) Steinberg algebras. For our particular case we will use \cite[Corollary~3.2]{James} as follows:

\begin{corollary}\label{steinbdiagonal}
Let $R$ be an integral domain (commutative ring, with no zero-divisors and with a unit). For $i=1,2$, let $\G_i$ be an ample Hausdorff groupoid such that there is a dense subset $X_i \subseteq \G_i^0$, such that the group-ring $R((\G_i)_x^x)$ has no zero-divisors and only trivial units for all $x \in X_i$. Then $\G_1$ and $\G_2$ are isomorphic if and only if there is a diagonal-preserving isomorphism between $A_R(\G_1)$ and $A_R(\G_2)$.
\end{corollary}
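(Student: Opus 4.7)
The statement is a direct specialization of \cite[Corollary~3.2]{James}, so my plan is to reduce it to the Carlsen--Rout reconstruction theorem after verifying the forward implication by hand.

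For the forward direction ($\Rightarrow$), I would start with a topological groupoid isomorphism $\phi:\G_1\to\G_2$ and set $\Phi(f)=f\circ\phi^{-1}$ for $f\in A_R(\G_1)$. Since $\phi$ is a homeomorphism sending compact bisections to compact bisections, $\Phi$ maps into $A_R(\G_2)$ and is an $R$-linear bijection. Multiplicativity reduces to a change of summation variable using that $\phi$ respects composition: for $b\in\G_2$,
\[
\Phi(f\ast g)(b)=\sum_{cd=\phi^{-1}(b)}f(c)g(d)=\sum_{c'd'=b}\Phi(f)(c')\Phi(g)(d').
\]
As $\phi((\G_1)^{(0)})=(\G_2)^{(0)}$, $\Phi$ restricts to an isomorphism $D_R(\G_1)\to D_R(\G_2)$, so the implication is trivial once the definitions are unpacked.

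For the reverse direction ($\Leftarrow$), I would invoke the Carlsen--Rout reconstruction in three steps. First, the restriction $\Phi|_{D_R(\G_1)}$ is an isomorphism of commutative $R$-algebras onto $D_R(\G_2)$, which---since $R$ is an integral domain and $D_R(\G_i)=\operatorname{span}_R\{1_U:U\subseteq(\G_i)^{(0)}\text{ compact open}\}$---induces a homeomorphism $\phi^{(0)}:(\G_1)^{(0)}\to(\G_2)^{(0)}$ via Stone duality applied to the lattice of idempotents. Second, I would study the normalizer semigroup of $D_R(\G_i)$ inside $A_R(\G_i)$ in order to identify the algebraic elements that correspond to scalar multiples of characteristic functions $1_B$ of compact bisections $B\in\G_i^{a}$. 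Third, I would use the induced bijection between normalizers on the two sides to extend $\phi^{(0)}$ to a map $\phi:\G_1\to\G_2$, with multiplicativity of $\Phi$ yielding compatibility with composition and continuity coming from the fact that the range maps are local homeomorphisms onto their images.

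The main obstacle is the second step. The hypothesis that $R((\G_i)_x^x)$ has no zero divisors and only trivial units for $x$ in a dense subset $X_i\subseteq(\G_i)^{(0)}$ is precisely the rigidity needed to rule out algebraic symmetries arising from nontrivial isotropy: it guarantees that any nonzero element of $A_R(\G_i)$ normalizing the diagonal is, up to multiplication by a diagonal unit, the characteristic function of a single compact bisection. Without this condition, distinct bisections could become indistinguishable at the algebraic level (through a superposition supported on the same isotropy fibre), and the reconstruction of $\G_i$ outside the unit space would fail. Once the normalizer-to-bisection correspondence is in place, the assembly of $\phi$ and the verification that it is a topological groupoid isomorphism are routine.
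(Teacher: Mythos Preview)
Your proposal is correct. In the paper this corollary is not proved at all: it is simply quoted verbatim as \cite[Corollary~3.2]{James} and then used as a black box. You go further, giving an explicit (and correct) argument for the easy direction and a faithful sketch of the Carlsen--Rout reconstruction for the hard one, including the precise role of the no-zero-divisors/trivial-units hypothesis in controlling the normalizer semigroup. So your write-up is strictly more informative than the paper's treatment; if anything, you could shorten it by simply citing \cite[Corollary~3.2]{James} as the paper does, but the added explanation is not wrong.
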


Recall that if $R$ is an integral domain and  $G$ is a group, the group-ring $RG$ of $G$ is defined by 
$$RG := \left\lbrace \sum_{i=1}^n r_ig_i \ : n \in \N, r_i \in R \ \mbox{and} \ g_i \in G \right\rbrace.$$
An element $x \in RG$ is a \emph{unit} if there exist $y, z \in RG$ such that $xy = 1 = zx.$ A unit is \emph{trivial} if it has the form $rg$ for some $r \in R$ and $g \in G.$

By the previous corollary, we need to find sufficient (and necessary, if possible) conditions so that the group-ring $R((\Gx)_{(1,x)}^{(1,x)})$ has no zero-divisors and only trivial units. We have that
\begin{align*}
(\Gx)_{(1,x)}^{(1,x)} & = \{(t,y) \in \Gx \ : \ r(t,y)=(1,x)=s(t,y)\} \\
                      & = \{(t,y) \in \Gx \ : \ (1,y)=(1,x)=(1,\theta_{t^{-1}}(y))\} \\
                      & = \{(t,x) \in \Gx \ : \ x=\theta_{t^{-1}}(x) \}. 
\end{align*}

Notice that the group $(\Gx)_{(1,x)}^{(1,x)}$ is isomorphic to a subgroup of $G.$ To study this subgroup we present some results of \cite{Higman} below.

An infinite group $G$ is said to be \emph{indexed} if we are given a homomorphism $\gamma$ of $G$ in the additive group of integers, such that $\gamma(G)$ does not consist of zero alone. In general, a group can be indexed in more than one way. An infinite group $G$ is said to be \emph{indicable throughout} if every subgroup of $G$, not consisting of the unit alone, can be indexed. Notice that any non-trivial subgroup of an indicable throughout group is also indicable throughout. Since any free group is indexed, and any subgroup of a free group is either a free group or the unit alone, we have that any free group is indicable throughout. Similarly any free Abelian group is indicable throughout.

\begin{theorem}\label{thoRGzerodivisores}
\cite[Theorem~12]{Higman} If $G$ is indicable throughout and $R$ has no zero-divisors then $RG$ has no zero-divisors.
\end{theorem}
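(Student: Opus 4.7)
The plan is to prove the theorem by contradiction and induction on $m(x,y) := |\supp(x)| + |\supp(y)|$, over pairs of nonzero elements $x, y \in RG$ satisfying $xy = 0$. In the base case $m = 2$, both $x$ and $y$ are monomials $r_1 g_1,\, r_2 g_2$, so $xy = (r_1 r_2)(g_1 g_2)$ has nonzero coefficient $r_1 r_2$ (using that $R$ has no zero-divisors), which is the desired contradiction.

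For the inductive step, I would pass to the finitely generated subgroup $H := \langle \supp(x) \cup \supp(y)\rangle \leq G$. If $H$ is trivial, then $x, y \in R$ and we are again done; otherwise, because $G$ is indicable throughout, $H$ admits a nontrivial homomorphism $\gamma : H \to \Z$, with image $d\Z$ for some $d \geq 1$. Choose $t \in H$ with $\gamma(t) = d$ and set $N = \ker\gamma$, so that $H = \bigsqcup_{k \in \Z} N t^k$ and $RH$ becomes $\Z$-graded with degree-$k$ component $(RN)\,t^k$. Writing $x = \sum_k x_k t^k$ and $y = \sum_l y_l t^l$ with $x_k, y_l \in RN$, let $a, b$ be the largest indices for which $x_a, y_b \neq 0$. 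The only contribution to the degree-$(a+b)$ piece of $xy$ comes from $(x_a t^a)(y_b t^b)$, so the equation $xy = 0$ forces $u \cdot (t^a v t^{-a}) = 0$ in $RN$, where $u := x_a$ and $v := y_b$, and both $u$ and $t^a v t^{-a}$ are nonzero.

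The induction then splits. If $\gamma$ is non-constant on $\supp(x)$ or on $\supp(y)$, then $|\supp(x_a)| < |\supp(x)|$ or $|\supp(y_b)| < |\supp(y)|$, so the pair $(u, t^a v t^{-a})$ has strictly smaller $m$-value and the inductive hypothesis (applied inside $RG$, since $N \leq G$) yields a contradiction. The hard case is when each of $\supp(x), \supp(y)$ lies in a single coset of $N$: then $u$ and $v$ have the same support sizes as $x$ and $y$, so the $m$-induction does not advance, even though the supports are now contained in the proper subgroup $N \subsetneq H$. I expect this stubborn case to be the main obstacle. To handle it I would either (i) iterate the reduction and add a well-founded secondary induction on an invariant of the ambient finitely generated subgroup (exploiting that a nontrivial finitely generated indicable-throughout group has positive first Betti number to force an eventual decrease in, say, the minimal generator count), or (ii) more cleanly, invoke the Burns--Hale theorem that every locally indicable group is left-orderable, and replace the whole $\gamma$-filtration with the classical top-term argument for a left-invariant total order on $H$, which pins down a unique product $g_{\max} h_{\max} \in \supp(xy)$ with nonzero coefficient and bypasses the troublesome case entirely.
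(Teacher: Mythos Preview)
The paper does not prove this statement at all: it is quoted verbatim as \cite[Theorem~12]{Higman} and used as a black box, so there is no ``paper's own proof'' to compare against. That said, your outline is essentially Higman's argument, and the gap you flag as the ``hard case'' is not a real obstacle --- you are just missing one normalization step.

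Before passing to $H = \langle \supp(x)\cup\supp(y)\rangle$, replace $x$ by $g_0^{-1}x$ and $y$ by $yh_0^{-1}$ for some $g_0\in\supp(x)$, $h_0\in\supp(y)$. This preserves $|\supp(x)|$, $|\supp(y)|$, and the relation $xy=0$, and arranges $1\in\supp(x)\cap\supp(y)$. Now if $\gamma$ were constant on $\supp(x)$ that constant would be $\gamma(1)=0$, so $\supp(x)\subseteq N=\ker\gamma$; likewise for $\supp(y)$. If both held, then $H\subseteq N$, contradicting the nontriviality of $\gamma$ on $H$. Hence $\gamma$ is non-constant on at least one of the two supports, the corresponding top-degree piece has strictly smaller support, and your induction on $m$ goes through cleanly with no secondary invariant needed. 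Your option~(ii) via Burns--Hale and left-orderability certainly works, but it is anachronistic (Burns--Hale is 1972, Higman is 1940) and a much heavier tool than the one-line normalization above; option~(i) is too vague as stated.
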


\begin{theorem}\label{thoRGunitstrivial}
\cite[Theorem~13]{Higman} If $G$ is indicable throughout and $R$ has no zero-divisors then all the units of $RG$ are trivial.
\end{theorem}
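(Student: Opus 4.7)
The plan is to use the indexing homomorphism provided by the indicability hypothesis to reduce the problem to a group ring over a proper subgroup, then iterate, with Theorem~\ref{thoRGzerodivisores} as the crucial input. Let $u \in RG$ be a unit with inverse $v$. Since $\supp(u) \cup \supp(v)$ is finite, I may replace $G$ by the (nontrivial) finitely generated subgroup $H$ it generates. Being a nontrivial subgroup of an indicable-throughout group, $H$ is itself indicable throughout, so there is a nontrivial homomorphism $\gamma \colon H \to \Z$; set $K := \ker \gamma$.

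Next, I decompose $u = \sum_n u_n$ and $v = \sum_n v_n$ according to the $\gamma$-value of the supporting group element, and let $n_\pm$, $m_\pm$ be the maximal and minimal values of $n$ for which $u_n$, respectively $v_n$, is nonzero. In the product $uv$, only $u_{n_+} v_{m_+}$ contributes at $\gamma$-value $n_+ + m_+$, and only $u_{n_-} v_{m_-}$ at $n_- + m_-$. Writing $u_{n_+} = g_0 \tilde u$ and $v_{m_+} = g_1 \tilde v$ with $\tilde u, \tilde v \in RK \setminus \{0\}$, the normality of $K$ recasts this product as $u_{n_+} v_{m_+} = g_0 g_1 \cdot (g_1^{-1} \tilde u g_1)\tilde v$. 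Since $K$ is either trivial or a nontrivial indicable-throughout subgroup, Theorem~\ref{thoRGzerodivisores} (or simply the no-zero-divisor hypothesis on $R$ in the trivial case) ensures $RK$ has no zero-divisors, so this product is nonzero. Because $uv = 1$ is concentrated at $\gamma$-value $0$, we must have $n_+ + m_+ = 0 = n_- + m_-$, which forces $n_+ = n_-$ and $m_+ = m_-$: both $u$ and $v$ are supported on a single coset of $K$.

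Writing then $u = g_0 \tilde u$ and $v = g_1 \tilde v$ with $\tilde u, \tilde v \in RK$, the identity $uv = e$ rearranges (using $g_0 g_1 \in K$) to an equation in $RK$ showing that $g_1^{-1} \tilde u g_1$ is a unit in $RK$. If $K$ is trivial then $\tilde u \in R$ and $u = \tilde u \cdot g_0$ is already trivial; otherwise one would like to invoke an inductive hypothesis on $RK$. The principal obstacle is organizing this induction to be genuinely well-founded: the support size of $u$ does not decrease when passing to $g_1^{-1} \tilde u g_1 \in RK$, and the chain of iterated kernels $K \supset K' \supset \cdots$ produced by repeating the reduction need not terminate for an arbitrary indicable-throughout group. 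The delicate point of Higman's proof is therefore the choice of a well-founded parameter attached to the pair $(H, u)$ that strictly decreases with each reduction step---rather than a naive support- or rank-count---and carrying out the argument uniformly in that parameter.
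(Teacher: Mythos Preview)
The paper does not prove this theorem; it simply quotes the result as \cite[Theorem~13]{Higman} and moves on. So there is no proof in the paper to compare your attempt against.

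That said, your approach is essentially Higman's, and the step you flag as the ``delicate point'' is not actually needed: you have already done all the real work but have missed the one-line contradiction that finishes the argument. After left-translating so that the identity $e$ lies in $\supp(u)$ (harmless, since $g^{-1}u$ is a trivial unit iff $u$ is), take $H$ to be the subgroup generated by $\supp(u)\cup\supp(v)$; this $H$ is nontrivial because $|\supp(u)|\geq 2$ and $e\in\supp(u)$. Now run your degree argument with a nontrivial $\gamma\colon H\to\Z$ and $K=\ker\gamma$. You correctly show that each of $\supp(u)$ and $\supp(v)$ lies in a single $K$-coset; since $e\in\supp(u)$, that coset is $K$ itself, and then $uv=e$ forces $\supp(v)\subseteq K$ as well. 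But $H$ is \emph{generated} by $\supp(u)\cup\supp(v)\subseteq K$, so $H\subseteq K\subsetneq H$, a contradiction. There is no infinite descent to organize and no well-founded parameter to manufacture; the argument terminates in a single step once you normalise $e\in\supp(u)$ and remember that $H$ was chosen to be generated by the supports.
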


With these two theorems we obtain the following lemma:

\begin{lemma}\label{lemmaindicable}
Let $G$ be an indicable throughout group and let $\theta=(\{X_g\}_{g \in G}, \{\theta_g\}_{g \in G})$ be a partial action of $G$ on $X.$ Then the group-ring $R((\Gx)_{(1,x)}^{(1,x)})$ has no zero-divisors and only trivial units, for all $(1,x) \in (\Gx)^{(0)}.$
\end{lemma}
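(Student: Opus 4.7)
The plan is to reduce the statement to Theorems~\ref{thoRGzerodivisores} and~\ref{thoRGunitstrivial} by identifying the isotropy group $(\Gx)_{(1,x)}^{(1,x)}$ with a subgroup of $G$, and then invoking the fact quoted in the text that any non-trivial subgroup of an indicable throughout group is itself indicable throughout.

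First I would introduce
$$
H_x := \{\, t \in G \, : \, x \in X_t \cap X_{t^{-1}} \text{ and } \theta_{t^{-1}}(x) = x \,\}
$$
and verify that $H_x$ is a subgroup of $G$. The identity $1$ belongs to $H_x$ because $\theta_1 = \mathrm{id}_X$; closure under inversion follows by applying $\theta_t$ to the equation $\theta_{t^{-1}}(x) = x$. Closure under multiplication is the only delicate point. If $s, t \in H_x$, then $x \in X_t$ and $\theta_{t^{-1}}(x) = x \in X_s$, so the composition $\theta_{s^{-1}} \circ \theta_{t^{-1}}$ is defined at $x$. Combining axioms (\ref{A2}) and (\ref{A4}) of Definition~\ref{acaosemigroup2} (specialised to a group action, this is the standard fact that $\theta_g \circ \theta_h$ is a restriction of $\theta_{gh}$) yields $x \in X_{ts}$ and $\theta_{(ts)^{-1}}(x) = \theta_{s^{-1}} \theta_{t^{-1}}(x) = x$. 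The symmetric argument with $\theta_t \circ \theta_s$ gives $x \in X_{(ts)^{-1}}$, so $ts \in H_x$.

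Second I would exhibit the isomorphism $(\Gx)_{(1,x)}^{(1,x)} \cong H_x$ via $(t,x) \mapsto t$. By the computation already carried out in the paragraph preceding the lemma, this map is a bijection onto $H_x$; it is multiplicative because the groupoid product reads $(t,x)(s,x) = (ts, x)$ precisely when $\theta_{t^{-1}}(x) = x$, which is guaranteed for elements of the isotropy group.

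Finally I would split into two cases. If $H_x = \{1\}$, then $R((\Gx)_{(1,x)}^{(1,x)}) \cong R$, which under our standing assumption that $R$ has no zero-divisors immediately yields no zero-divisors and only trivial units. Otherwise $H_x$ is a non-trivial subgroup of the indicable throughout group $G$, hence itself indicable throughout, and Theorems~\ref{thoRGzerodivisores} and~\ref{thoRGunitstrivial} deliver both conclusions. The main obstacle is the bookkeeping in the first step: one must invoke the partial-action compatibility carefully to derive membership in both $X_{ts}$ and $X_{(ts)^{-1}}$ from the isotropy hypotheses on $s$ and $t$; once $H_x$ is identified as a subgroup, the remainder is purely formal.
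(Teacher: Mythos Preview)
Your proposal is correct and follows essentially the same approach as the paper: identify the isotropy group $(\Gx)_{(1,x)}^{(1,x)}$ with a subgroup of $G$, split into the trivial and non-trivial cases, and in the latter invoke Theorems~\ref{thoRGzerodivisores} and~\ref{thoRGunitstrivial} using that non-trivial subgroups of indicable throughout groups are indicable throughout. The only difference is that you spell out in detail the verification that $H_x$ is a subgroup and that $(t,x)\mapsto t$ is a group isomorphism, whereas the paper simply asserts that the isotropy group is isomorphic to a subgroup of $G$ (relying on the computation already done just before the lemma) and proceeds directly to the case split.
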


\begin{proof}
We have that $(\Gx)_{(1,x)}^{(1,x)}$ is isomorphic to a subgroup of $G.$ Then this subgrupo is indicable throughout or the unit alone. In the first case, Theorems~\ref{thoRGzerodivisores} and \ref{thoRGunitstrivial} ensure that $R((\Gx)_{(1,x)}^{(1,x)})$ has no zero-divisors and only trivial units. In the second case, $R((\Gx)_{(1,x)}^{(1,x)})=R(\{(1,x)\}) \cong R,$ that also has no zero-divisors and only trivial units.
\end{proof}

\begin{theorem}\label{theodiagindicable}
Let $R$ be an integral domain, let $G, H$ be indicable throughout, discrete groups, let $X, Y$ be totally disconnect, locally compact, Hausdorff spaces, and let $\theta=(\{X_g\}_{g \in G}, \{\theta_g\}_{g \in G})$ and $\gamma=(\{Y_h\}_{h \in H}, \{\theta_h\}_{h \in H})$ be partial actions. Then the following are equivalent:
\begin{enumerate}[{\rm (i)}]
\item the transformation groupoids $\Gx$ and $H \ltimes_{\gamma} Y$ are isomorphic as topological groupoids,
\item there exists a diagonal-preserving isomorphism $\Gamma: \ A_R(\Gx) \longrightarrow  A_R(H \ltimes_\gamma Y),$ 
\item there exists an isomorphism $\Phi: \ \mathcal{L}_c(X)\rtimes G \longrightarrow \mathcal{L}_c(Y)\rtimes H$ with $\Phi(\Lc) = \mathcal{L}_c(Y)$.
\end{enumerate}
\end{theorem}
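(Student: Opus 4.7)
The plan is to prove the equivalences by chaining together the realization theorem of Section~3 with the Carlsen--Rout type result in Corollary~\ref{steinbdiagonal} and the isotropy computation of Lemma~\ref{lemmaindicable}. The equivalence (ii)$\Leftrightarrow$(iii) is essentially a transport statement: Theorem~\ref{theorisomortransfgrou} gives an isomorphism $\rho_X: \Lc \rtimes G \to A_R(\Gx)$ of $R$-algebras, and the analogous $\rho_Y: \mathcal{L}_c(Y) \rtimes H \to A_R(H\ltimes_\gamma Y)$; by Corollary~\ref{coroldiagonal}, each of these isomorphisms sends the commutative subalgebra $\Lc$ (resp.\ $\mathcal{L}_c(Y)$) onto $D_R(\Gx)$ (resp.\ $D_R(H\ltimes_\gamma Y)$). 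Therefore, given a $\Phi$ as in (iii), the map $\Gamma := \rho_Y \circ \Phi \circ \rho_X^{-1}$ is a diagonal-preserving isomorphism as in (ii), and conversely $\Phi := \rho_Y^{-1} \circ \Gamma \circ \rho_X$ takes $\Lc$ onto $\mathcal{L}_c(Y)$. This step is essentially formal once one records that the diagrams commute on the generators $f_g\delta_g$.

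For (i)$\Rightarrow$(ii), a topological groupoid isomorphism $\psi: \Gx \to H\ltimes_\gamma Y$ induces an $R$-algebra isomorphism $\psi_*: A_R(\Gx) \to A_R(H\ltimes_\gamma Y)$ by $\psi_*(f) = f \circ \psi^{-1}$. Since $\psi$ carries units to units, $\psi_*$ sends the diagonal $D_R(\Gx)$ onto $D_R(H\ltimes_\gamma Y)$. This is the routine direction.

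The main content is in (ii)$\Rightarrow$(i), which will rely on Corollary~\ref{steinbdiagonal}. To invoke it I need to produce a dense subset of $(\Gx)^{(0)}$ on which the isotropy group rings have no zero-divisors and only trivial units, and similarly for $H\ltimes_\gamma Y$. This is where Lemma~\ref{lemmaindicable} enters: since $G$ is indicable throughout, for \emph{every} $(1,x) \in (\Gx)^{(0)}$ the isotropy group $(\Gx)_{(1,x)}^{(1,x)}$ is isomorphic to a subgroup of $G$, hence either trivial or itself indicable throughout, so its group-ring over the integral domain $R$ has no zero-divisors and only trivial units. Thus the entire unit space (which is trivially dense in itself) satisfies the hypothesis of Corollary~\ref{steinbdiagonal}, and the same applies to $H\ltimes_\gamma Y$. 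A diagonal-preserving isomorphism between the Steinberg algebras therefore yields a topological groupoid isomorphism, giving (i).

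I expect the only delicate point to be verifying that the identification $D_R(\Gx)\cong\Lc$ coming from Corollary~\ref{coroldiagonal} is genuinely the one needed to translate ``$\Phi(\Lc)=\mathcal{L}_c(Y)$'' into ``$\Gamma(D_R(\Gx))=D_R(H\ltimes_\gamma Y)$''; this is a bookkeeping matter about identifying $\Lc$ with $\Lc\,\delta_1\subseteq \Lg$ and $D_R(\Gx)$ with its image $i(A_R((\Gx)^{(0)}))\subseteq A_R(\Gx)$ under the canonical embedding. The rest is an application of the already established machinery.
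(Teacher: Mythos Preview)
Your proposal is correct and follows essentially the same approach as the paper: use Lemma~\ref{lemmaindicable} to verify the hypotheses of Corollary~\ref{steinbdiagonal} for the equivalence (i)$\Leftrightarrow$(ii), and transport (ii)$\Leftrightarrow$(iii) through the isomorphisms of Theorem~\ref{theorisomortransfgrou} together with the diagonal identification of Corollary~\ref{coroldiagonal}. The only cosmetic difference is that the paper obtains both directions of (i)$\Leftrightarrow$(ii) in one stroke from Corollary~\ref{steinbdiagonal}, whereas you handle (i)$\Rightarrow$(ii) by the explicit pushforward $f\mapsto f\circ\psi^{-1}$; this is harmless since that direction is routine.
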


\begin{proof}
\an{$(i) \Leftrightarrow (ii)$} By Lemma~\ref{lemmaindicable}, the hypotheses of Corollary~\ref{steinbdiagonal} are satisfied and thus we get the two implications.

\an{$(ii) \Rightarrow (iii)$} Let $\rho_G: \mathcal{L}_c(X)\rtimes G \longrightarrow A_R(\Gx)$ and $\rho_H : \mathcal{L}_c(Y)\rtimes H \longrightarrow  A_R(H \ltimes_\gamma Y)$ be the isomorphisms given by Theorem~\ref{theorisomortransfgrou}. Define $\Phi: \ \mathcal{L}_c(X)\rtimes G \longrightarrow \mathcal{L}_c(Y)\rtimes H$ by $\Phi:= \rho_H \circ \Gamma \circ \rho_G^{-1}.$ Clearly $\Phi$ is an isomorphisms and
\begin{align*}
\Phi(\Lc) & = \rho_H \circ \Gamma \circ \rho_G^{-1}(\Lc) \\
          & \stackrel{\ref{coroldiagonal}}{=}\rho_H \circ \Gamma (D_R(\Gx)) \\
          & \stackrel{(iii)}{=} \rho_H (D_R( H \ltimes_\gamma Y))\\
          & \stackrel{\ref{coroldiagonal}}{=} \mathcal{L}_c(Y).
\end{align*}
\an{$(iii) \Rightarrow (ii)$} Similar to the previous one, just take $\Gamma:=\rho_H^{-1} \circ \Phi \circ \rho_G.$ 
\end{proof}

In \cite[Theorem~2.7]{Xin} X. Li characterizes diagonal preserving isomorphisms of partial C*-crossed products, over commutative algebras, in terms of continuous orbit equivalence of the associated partial actions. We are now able to add two additional equivalent conditions to continuous orbit equivalence of partial actions, in terms of Steinberg algebras and partial skew group rings. Before we do this we recall the notion of continuous orbit equivalence below.

\begin{definition}
Partial actions $\theta=(\{X_g\}_{g \in G}, \{\theta_g\}_{g \in G})$ and $\gamma=(\{Y_h\}_{h \in H}, \{\gamma\}_{h \in H})$ are called continuously orbit equivalent if there exists a homeomorphism $\varphi: \: X \stackrel{\cong}{\longrightarrow} Y$, and continuous maps $a: \: \bigcup_{g \in G} \{g\} \times X_{g^{-1}} \longrightarrow H$, $b: \: \bigcup_{h \in H} \{h\} \times Y_{h^{-1}} \longrightarrow G$  such that
\begin{enumerate}[{\rm (i)}]
\item $ \varphi(\theta_g(x)) = \gamma_{a(g,x)}(\varphi(x)), $
\item $ \varphi^{-1}(\gamma_h(y)) = \theta_{b(h,y)}(\varphi^{-1}(y)).$
\end{enumerate}
Implicitly, we require that $a(g,x) \in H_{\varphi(x)}:=\{ h \in H \ : \ \varphi(x) \in X_{h^{-1}}\}$ and $b(h,y) \in G_{\varphi^{-1}(y)}:= \{ g \in G \ : \ \varphi^{-1}(y) \in  X_{g^{-1}}\}$.
\end{definition}

Recall that a partial action $\theta=(\{X_g\}_{g \in G}, \{\theta_g\}_{g \in G})$ is called \emph{topologically free} if, for every $1 \neq g \in G,$ $\{x \in X_{g^{-1}} \ :  \ \theta_g(x) \neq x \}$ is dense in $X_{g^{-1}}.$

\begin{lemma}\label{lemmafreeprincipal}
A partial action $\theta$ of $G$ on $X$ is topologically free if, and only if, the transformation groupoid $\Gx$ is topologically principal. 
\end{lemma}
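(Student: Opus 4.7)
The plan is to exploit the identification $X \cong (\Gx)^{(0)}$ via $x \mapsto (1,x)$ together with the isotropy description
$$(\Gx)_{(1,x)}^{(1,x)} = \{(t,x) \in \Gx : \theta_{t^{-1}}(x) = x\}$$
already computed earlier in this section. Reindexing $t = g^{-1}$, I see that $(1,x)$ has trivial isotropy in $\Gx$ if and only if, for every $1 \neq g \in G$, either $x \notin X_{g^{-1}}$ or $\theta_g(x) \neq x$. Writing $D$ for the set of such $x$, the lemma becomes the equivalence: $D$ is dense in $X$ if and only if, for every $1 \neq g \in G$, the set $F_g := \{x \in X_{g^{-1}} : \theta_g(x) = x\}$ has empty interior in $X_{g^{-1}}$.

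For the direction ``topologically principal $\Rightarrow$ topologically free'' I would fix $1 \neq g$ and a nonempty open $U \subseteq X_{g^{-1}}$. Density of $D$ in $X$ yields some $x \in U \cap D$, and the defining property of $D$ applied to this particular $g$ forces $\theta_g(x) \neq x$; hence $U \not\subseteq F_g$, which is exactly topological freeness.

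For the converse I would use Baire category. Since $X_{g^{-1}}$ is clopen in $X$ by hypothesis and $\theta_g$ is continuous with $X$ Hausdorff, $F_g$ is closed in $X_{g^{-1}}$, hence closed in $X$. Topological freeness says $F_g$ has empty interior in $X_{g^{-1}}$, which coincides with its interior in $X$ because $X_{g^{-1}}$ is open. Thus each $F_g$ is closed and nowhere dense in $X$. Since $X$ is locally compact Hausdorff it is a Baire space, so provided $G$ is countable (the standing assumption throughout this Steinberg-algebraic setting), $\bigcup_{g \neq 1} F_g$ has empty interior in $X$. Its complement is precisely $D$, whence $D$ is dense in $X$.

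The main obstacle is the countability issue: the forward direction genuinely requires Baire applied to the whole family $\{F_g\}_{g \neq 1}$, so without countability of $G$ (or second-countability of $X$) an uncountable union of closed nowhere-dense sets could swallow an open set and the argument would collapse. Under the standing countability hypothesis the proof reduces to this Baire argument together with the bookkeeping that re-expresses the trivial-isotropy set $D$ explicitly in terms of the partial action.
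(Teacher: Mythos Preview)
Your argument is correct and is essentially the standard proof of this fact. Note, however, that the paper does not give its own proof of this lemma: it simply refers to \cite[Lemma~2.4]{Xin}. So there is nothing to compare at the level of \emph{this} paper's argument; you have supplied what the paper outsources.

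A couple of remarks on the substance. Your identification of the trivial-isotropy set $D$ with $X \setminus \bigcup_{g\neq 1} F_g$ is exactly right, and the direction ``topologically principal $\Rightarrow$ topologically free'' is unconditional, as you say. For the converse you correctly isolate the Baire step, and you are right that it requires $G$ to be countable: without that hypothesis the union $\bigcup_{g\neq 1} F_g$ need not be meagre and the implication can genuinely fail. The paper itself never states countability of $G$ explicitly, but Li's \cite{Xin} (where the cited Lemma~2.4 lives) works under a countability/second-countability standing assumption, so your flag is well placed and matches the hypotheses under which the cited result is valid. One tiny cosmetic point: closedness of $F_g$ in $X$ uses that $X_{g^{-1}}$ is \emph{closed} (not just open); since the paper assumes each $X_g$ is clopen, this is fine, and you invoke exactly that.
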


\begin{proof}
See \cite[Lemma~2.4]{Xin}.
\end{proof}




Before we state our next Theorem we need the following lemma.

\begin{lemma}\label{lemmagroupring}
Let $R$ be an integral domain and $\theta$ be a partial action of $G$ on $X.$ If $\theta$ is topologically free then there exist a dense subset $Z \subseteq  (\Gx)^{(0)}$ such that the group-ring $R((\Gx)_{z}^{z}))$  has no zero-divisors and only trivial units for all $z \in Z.$
\end{lemma}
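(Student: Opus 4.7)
The plan is to reduce the statement to Lemma~\ref{lemmafreeprincipal} and then observe that on the dense set of points with trivial isotropy, the group ring in question is just a copy of $R$ itself, for which the conclusion is immediate.

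More precisely, first I would invoke Lemma~\ref{lemmafreeprincipal}: since $\theta$ is topologically free, the transformation groupoid $\Gx$ is topologically principal. By definition this means that the set
\[
Z := \{ z \in (\Gx)^{(0)} \ : \ (\Gx)_z^z = \{z\} \}
\]
of units with trivial isotropy is dense in $(\Gx)^{(0)}$. This is the set $Z$ that the statement asks for.

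Next, for every $z \in Z$ the isotropy group $(\Gx)_z^z$ is the trivial group, so
\[
R((\Gx)_z^z) \cong R.
\]
Since $R$ is an integral domain, $R$ has no zero-divisors, which gives the first claim. For the second claim, recall that a unit $x \in R((\Gx)_z^z)$ is trivial if $x = rg$ for some $r \in R$ and $g \in (\Gx)_z^z$; but here every element of $R((\Gx)_z^z)$ is of the form $r \cdot z$ with $r \in R$, so every unit is automatically trivial.

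There is essentially no obstacle: the lemma is a direct translation of the hypothesis via Lemma~\ref{lemmafreeprincipal}, together with the trivial observation that the group ring of the one-element group over an integral domain behaves well. The only minor point to be careful about is to phrase the definition of ``trivial unit'' correctly so that it applies in the degenerate case where the group is trivial, which is immediate.
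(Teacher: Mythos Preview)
Your proposal is correct and follows essentially the same approach as the paper: invoke Lemma~\ref{lemmafreeprincipal} to obtain the dense set $Z$ of units with trivial isotropy, and then observe that for such $z$ the group ring $R((\Gx)_z^z)\cong R$ is an integral domain with only trivial units. The paper's argument is slightly terser, but the content is identical.
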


\begin{proof}
By Lemma (\ref{lemmafreeprincipal}) the groupoid $\Gx$ is topologically principal, that is, there exist a dense subset $Z$ of $(\Gx)^{(0)}$ such that $(\Gx)_{z}^{z}= \{z\},$ for every $z \in Z.$ Thus for $z \in Z$ we have that  $R((\Gx)_{z}^{z})=R(\{z\}) \cong R,$ which has no zero-divisors and only trivial units.
\end{proof}

\begin{theorem}
Let $R$ be an integral domain, let $G, H$ be discrete groups, let $X, Y$ be totally disconnect, locally compact, Hausdorff spaces, and let $\theta=(\{X_g\}_{g \in G}, \{\theta_g\}_{g \in G})$ and $\gamma=(\{Y_h\}_{h \in H}, \{\theta_h\}_{h \in H})$ be topologically free partial actions. Then the following are equivalent:
\begin{enumerate}[{\rm (i)}]
\item $\theta$ and $\gamma$ are continuously orbit equivalent,
\item the transformation groupoids $\Gx$ and $H \ltimes_{\gamma} Y$ are isomorphic as topological groupoids,
\item there exists a diagonal-preserving isomorphism $\Gamma: \ A_R(\Gx) \longrightarrow  A_R(H \ltimes_\gamma Y),$ 
\item there exists an isomorphism $\Phi: \ \mathcal{L}_c(X)\rtimes G \longrightarrow \mathcal{L}_c(Y)\rtimes H$ with $\Phi(\Lc) = \mathcal{L}_c(Y)$,
\item there exists an isomorphism $\Phi: \: C_0(X) \rtimes_r G \longrightarrow C_0(Y) \rtimes_r H$ with $\Phi(C_0(X)) = C_0(Y)$.
\end{enumerate}
Moreover, \an{(ii) $\Rightarrow$ (i)} holds in general (i.e., without the assumption of topological freeness).
\end{theorem}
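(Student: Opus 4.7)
The plan is to prove this by tying together three external results with the structural isomorphism already proved in Theorem~\ref{theorisomortransfgrou}. I would organize the equivalences in a cycle or, more efficiently, show that (ii) is the central pivot to which all the other conditions reduce.

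First I would establish the equivalence (i) $\Leftrightarrow$ (ii) $\Leftrightarrow$ (v). These three conditions concern the partial action, the transformation groupoid and the reduced C*-crossed product, and they are exactly the content of Li's \cite[Theorem~2.7]{Xin}. The \emph{moreover} clause ``(ii) $\Rightarrow$ (i) holds without topological freeness'' is already part of Li's statement, so I would simply record that this implication does not need topological freeness when invoking the citation.

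Next I would prove (ii) $\Leftrightarrow$ (iii). Here the hypothesis of topological freeness is used through Lemma~\ref{lemmafreeprincipal} and Lemma~\ref{lemmagroupring}: topological freeness of $\theta$ and $\gamma$ yields dense subsets $Z_\theta \subseteq (\Gx)^{(0)}$ and $Z_\gamma \subseteq (H \ltimes_\gamma Y)^{(0)}$ at which the isotropy groups are trivial, so that the corresponding group rings over $R$ are $\cong R$ and thus have no zero-divisors and only trivial units. This verifies the hypotheses of Corollary~\ref{steinbdiagonal} (the specialization of Carlsen--Rout \cite[Corollary~3.2]{James}), which then gives the desired equivalence between topological groupoid isomorphism and diagonal-preserving isomorphism of the Steinberg algebras.

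Finally I would deduce (iii) $\Leftrightarrow$ (iv) by conjugating with the algebra isomorphism $\rho$ of Theorem~\ref{theorisomortransfgrou}, exactly as in the proof of Theorem~\ref{theodiagindicable}: given $\Gamma$ as in (iii), set $\Phi := \rho_H^{-1} \circ \Gamma \circ \rho_G$; given $\Phi$ as in (iv), set $\Gamma := \rho_H \circ \Phi \circ \rho_G^{-1}$. Corollary~\ref{coroldiagonal} ensures that $\rho$ maps $\Lc$ onto $D_R(\Gx)$, so the ``preservation of diagonal'' condition is transported along this conjugation. I do not expect any genuine obstacle in the argument: the theorem is essentially a compilation of Theorem~\ref{theodiagindicable} (which already gives (ii)$\Leftrightarrow$(iii)$\Leftrightarrow$(iv) under the indicability assumption) together with Li's theorem. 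The only point requiring care is checking that topological freeness is sufficient (in place of indicability) to apply Corollary~\ref{steinbdiagonal}, and this is precisely what Lemma~\ref{lemmagroupring} accomplishes.
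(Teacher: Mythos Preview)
Your proposal is correct and follows essentially the same approach as the paper: the paper also invokes Li's \cite[Theorem~2.7]{Xin} for the equivalences involving (i), (ii) and (v) (and for the ``moreover'' clause), uses Lemma~\ref{lemmagroupring} to verify the hypotheses of \cite[Corollary~3.2]{James} for (ii)$\Leftrightarrow$(iii), and transports (iii)$\Leftrightarrow$(iv) via the isomorphisms $\rho_G,\rho_H$ of Theorem~\ref{theorisomortransfgrou} and Corollary~\ref{coroldiagonal} exactly as in Theorem~\ref{theodiagindicable}. The only difference is organizational: you group (i)$\Leftrightarrow$(ii)$\Leftrightarrow$(v) together under Li's theorem at the outset, whereas the paper treats (i)$\Leftrightarrow$(ii) first and returns to Li at the end to close the cycle.
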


\begin{proof} 
\an{$(i) \Leftrightarrow (ii)$} See \cite[Theorem 2.7]{Xin}.

\an{$(ii) \Leftrightarrow (iii)$} By Lemma (\ref{lemmagroupring}), the hypotheses of \cite[Corollary~3.2.]{James} are satisfied and hence we get the desired implications.

\an{$(iii) \Leftrightarrow (iv)$} Analogous to the proof of \an{$(ii) \Leftrightarrow (iii)$} in Theorem~\ref{theodiagindicable}.

\an{$(iv) \Leftrightarrow (i)$}  See \cite[Theorem 2.7]{Xin}.
\end{proof}

\section{Steinberg algebras realized as partial skew inverse semigroup rings }

In this section we will show that every Steinberg algebra can be realized as a partial skew inverse semigroup ring. This is an “algebraisation” of [Theorem~3.3.1]\cite{Paterson} and [Theorem 8.1]\cite{Quigg}.

\subsection*{The partial action of the inverse semigroup of bisections}\label{PAISGBi}

Next we recall the example of an inverse semigroup action which is intrinsic to every Hausdorff ample groupoid, as given in \cite{Exel}. 

From now on we fix a Hausdorff ample groupoid $\G$. Denote by $\Ga$ the set of all compact bisection in $\G.$ We have already seen, Proposition~\ref{bisectsetsemigroup}, that $\Ga$ is an inverse semigroup under the operations defined in (\ref{bisectionproduct}) and (\ref{bisectioninverse}). The idempotent semilattice of $\Ga$ consists precisely of the compact-open subsets of $\Go.$

Note that the inverse semigroup partial order (as in (\ref{semigrouporder})) in $\Ga$ is defined by
$$A \leq B \,\,\,\, \Leftrightarrow A \subseteq B, $$
where $A, B \in \Ga.$

Recall that the unit space $\Go$ is a Hausdorff, locally compact, totally disconnected topological space.  We want to define a partial action $\theta$ of $\Ga$ on $\Go.$ Given a compact bisection $B$ we have already mentioned that $s(B)$ and $r(B)$ are compact-open subsets of $\Go,$ and moreover that the maps
  $$ s_B : B \to s(B) \,\,\,\,\,   \mbox{e} \,\,\,\,\,   r_B : B \to r(B), $$
obtained by restricting $s$ and $r,$ respectively, are homeomorphisms. Given $u \in s(B)$ we let
\begin{equation}\label{acaosemigrupobissecao}
      \theta_B(u) := r_B(s_B^{-1}(u)). 
\end{equation}

\begin{center}
\fbox{
\begin{tikzpicture}
\node (y) at (0,0) {$s_B^{-1}(x)$};\filldraw(y.south) circle (1pt);
\node[fit=(y), circle, draw, minimum width=1.7cm, thick, label=above:\(B\)]{};

\node (z) at (3,-2) {{\scriptsize $r(s_B^{-1}(x))$}};\filldraw(z.west) circle (1pt);
\node[fit=(z), circle, draw, minimum width=1.5cm, thick, label=above:\(r(B)\)]{};

\node (x) at (-3,-2) {$x$};\filldraw(x.east) circle (1pt);
\node[fit=(x), circle, draw, minimum width=1.7cm, thick, label=above:\(s(B)\)]{};

\node (o) at (0,-3) {$\theta_B$};

\draw[->, shorten >=.1cm, >=stealth'](x.east) to [out=40, in=170](y.south);
\draw[->, shorten >=.1cm, >=stealth'](y.south) to [out=10, in=120] (z.west);
\draw[->, shorten >=.1cm, >=stealth'](x.east) to [out=-20, in=210] (z.west);
\end{tikzpicture}}
\end{center}

Clearly $\theta_B$ is a homeomorphism from $s(B)$ to $r(B).$ Here we have $B^*=B^{-1},$ $X_{B^*}=s(B)$ and $X_B=r(B).$

\begin{prop}\label{PABisections} 
The correspondence $B \mapsto \theta_B$, where $\theta_B$ is defined in (\ref{acaosemigrupobissecao}), gives a partial action of $\Ga$ on the unit space $\Go.$
\end{prop}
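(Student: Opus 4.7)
The plan is to verify the four defining conditions of a partial action from Definition~\ref{acaosemigroup2}, together with the topological conditions (each $X_B$ clopen, each $\theta_B$ a homeomorphism), using only the groupoid structure of $\G$ together with the basic fact that if $B \in \Ga$ then $r|_B$ and $s|_B$ are homeomorphisms onto the compact-open sets $r(B), s(B) \subseteq \Go$.

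First I would record the setup: for every $B \in \Ga$, put $X_B := r(B)$ and $X_{B^*} := s(B^{-1}) = r(B^{-1}) = s(B)$ (and similarly $X_{B^{-1}} = s(B)$), and observe that these are compact-open in $\Go$ because $r$ and $s$ are local homeomorphisms and $B$ is a compact bisection of the Hausdorff ample groupoid $\G$. In particular each $X_B$ is clopen. From the definition $\theta_B = r_B \circ s_B^{-1}$ it is then immediate that $\theta_B: s(B) \to r(B)$ is a homeomorphism.

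Next I would verify the four properties. For (i), given $v \in r(B)$, let $\beta := r_B^{-1}(v) \in B$, so $\beta^{-1} \in B^{-1}$ with $s(\beta^{-1}) = r(\beta) = v$, hence $s_{B^{-1}}^{-1}(v) = \beta^{-1}$ and $\theta_{B^{-1}}(v) = r(\beta^{-1}) = s(\beta) = s_B(r_B^{-1}(v)) = \theta_B^{-1}(v)$. For (ii), or rather the stronger equality (\ref{eqaction}), take $x \in s(B) \cap r(C)$. Then there exist unique $\beta \in B$, $\gamma \in C$ with $s(\beta) = x = r(\gamma)$; composability follows, $\beta\gamma \in BC$, and $\theta_B(x) = r(\beta) = r(\beta\gamma) \in r(BC)$, which shows $\theta_B(s(B) \cap r(C)) \subseteq r(B) \cap r(BC)$; the reverse inclusion is the symmetric argument applied to $B^{-1}$, using (i). Property (iii) is automatic: if $A \subseteq B$ then $r(A) \subseteq r(B)$. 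Finally, for (iv), let $x \in X_{C^*} \cap X_{C^*B^*} = s(C) \cap s(BC)$; the second condition produces $\beta \in B$, $\gamma \in C$ with $s(\gamma) = x$ and $s(\beta) = r(\gamma) = \theta_C(x)$, so $\theta_C(x) \in s(B)$ and
\[
\theta_B(\theta_C(x)) = r(\beta) = r(\beta\gamma) = \theta_{BC}(x).
\]

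There is no real obstacle here; the work is essentially bookkeeping with source and range maps and with the convention $X_B = r(B)$. The only subtlety worth flagging is correctly identifying $X_{s^*}$ on each side of the axioms, and repeatedly invoking the fact that $r|_B, s|_B$ are bijections on bisections so that the elements $\beta, \gamma$ produced in the verifications of (ii) and (iv) are uniquely determined, which makes the formulas $\theta_B(x) = r(s_B^{-1}(x))$ well defined and the composabilities automatic. Once these routine verifications are done, Definition~\ref{acaosemigroup2} (in the form stated immediately after it, where (ii) may be upgraded to equality and (iii) discarded) yields that $B \mapsto \theta_B$ is a partial action of the inverse semigroup $\Ga$ on $\Go$.
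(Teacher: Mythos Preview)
Your proposal is correct and follows essentially the same route as the paper: both verify the four conditions of Definition~\ref{acaosemigroup2} directly, using that $r|_B$ and $s|_B$ are bijections on a bisection $B$ to produce the unique elements $\beta,\gamma$ needed in the checks of (ii) and (iv). The only minor differences are cosmetic---you also spell out the topological requirements (each $X_B$ clopen, each $\theta_B$ a homeomorphism) and note the reverse inclusion in (ii), whereas the paper records those facts before the proposition and proves only the forward inclusion required by Definition~\ref{acaosemigroup2}(\ref{A2}).
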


\begin{proof}
This is proved in \cite[Proposition~5.3]{Exel}. For completeness we present a proof adapted to our setting below. 

To show that $\theta$ is a partial action we will use the characterization of Definition~\ref{acaosemigroup2}. 

If $v \in s(B^*)=s(B^{-1})=r(B),$ then
 $$\theta_{B^*}(v)=\theta_{B^{-1}}(v)=r_{B^{-1}}(s_{B^{-1}}^{-1}(v))= s_B(r_B^{-1}(v))=(\theta_{B}(v))^{-1},$$
and hence $\theta_{B^*}=\theta_B^{-1},$ proving item (\ref{A1}). 

For item (\ref{A2}), we need to prove that 
$$\theta_B(s(B)\cap r(C)) \subseteq r(B) \cap r(BC).$$
Let $u \in s(B) \cap r(C).$ Clearly $\theta_B(u) \in r(B).$ Since $u \in s(B) \cap r(C),$  there exists  $b \in B$ and $c \in C$ such that  $s(b)= u = r(c).$ Notice that  $bc \in BC$ and  $b$ is the only element of the bisection $B$ such that $s_B(b)=u.$

Thus $s_B^{-1}(u)=b$ and
 $$ \theta_B(u)=r_B(s_B^{-1}(u))=r_B(b)=bb^{-1}=r(bc) \in r(BC).$$

Item (\ref{A3}) follows because  $B \leq C$ if, and only if,  $B \subseteq C.$ 

For item (\ref{A4}) suppose that $u \in s(B)\cap s(CB)$, and that $\theta_C \theta_B(u)=v.$ Then there exists $b \in B, w \in r(B)\cap s(C)$ and $c \in C$ such that
  $$s(b)=u, \,\,\,\,\,\,\,\,\,\,\,\, r(b)=w, \,\,\,\,\,\,\,\,\,\,\,\, s(c)=w \,\,\,\,\,\,\, \mbox{and} \,\,\,\,\,\,\, r(c)=v.$$ 
  
\begin{center}
\fbox{
\begin{tikzpicture}
\node (w) at (0,0) {$w$};\filldraw(w.east) circle (1pt);
\node[fit=(w), circle, draw, minimum width=1.3cm, thick, label=above left:\(r(B)\)]{};

\node (a) at (0.5,0) {}; 
\node[fit=(a), circle, draw, minimum width=1.3cm, thick, label=above right:\(s(C)\)]{};

\node (c) at (2.5,1.5) {$c$};\filldraw(c.south) circle (1pt);
\node[fit=(c), circle, draw, minimum width=1.3cm, thick, label=above:\(C\)]{};

\node (v) at (4.5,-1) {$v$};\filldraw(v.west) circle (1pt);
\node[fit=(v), circle, draw, minimum width=1.3cm, thick, label=above right:\(r(CB)\)]{};

\node (f) at (4.1,-1) {}; 
\node[fit=(f), circle, draw, minimum width=1.3cm, thick, label=above left:\(r(C)\)]{};

\node (b) at (-2.5,1.5) {$b$};\filldraw(b.south) circle (1pt);
\node[fit=(b), circle, draw, minimum width=1.3cm, thick, label=above:\(B\)]{};

\node (u) at (-4.5,-1) {$u$};\filldraw(u.east) circle (1pt);
\node[fit=(u), circle, draw, minimum width=1.3cm, thick, label=above left:\(s(B)\)]{};

\node (e) at (-4.1,-1) {}; 
\node[fit=(e), circle, draw, minimum width=1.3cm, thick, label=above right:\(s(CB)\)]{};

\node (cb) at (0,-3) {$cb$};\filldraw(cb.north) circle (1pt);
\node[fit=(cb), circle, draw, minimum width=1.3cm, thick, label=below:\(CB\)]{};

\draw[->, shorten >=.1cm, >=stealth'](u.east) to [out=90, in=180] (b.south);
\draw[->, shorten >=.1cm, >=stealth'](b.south) to [out=0, in=120] (w.east);
\draw[->, shorten >=.1cm, >=stealth'](w.east) to [out=90, in=160] (c.south);
\draw[->, shorten >=.1cm, >=stealth'](c.south) to [out=-30, in=90] (v.west);
\draw[->, shorten >=.1cm, >=stealth'](u.east) to [out=-50, in=180] (cb.north);
\draw[->, shorten >=.1cm, >=stealth'](cb.north) to [out=0, in=230] (v.west);
\draw[->, shorten >=.1cm, >=stealth'](u.east) to (v.west);
\end{tikzpicture}}

\end{center}

Since $s(c)=w=r(b),$ we have that $bc \in CB$ and $s(cb)=b^{-1}b=s(b)=u.$ Since $CB$ is a bisection, $cb$ is the only element of  $CB$ such that $s(cb)= u,$ and hence $s_{CB}^{-1}(u)=cb.$ Moreover $r_{CB}(cb)=cc^{-1} = r(c)= v. $ Therefore, 
 $$\theta_{CB}(u)=r_{CB}(s_{CB}^{-1}(u))=r_{CB}(cb)= v = \theta_C \theta_B (u).$$
\end{proof}

As mentioned in Subsection~\ref{PAISG}, from the action $\theta$ of the semigroup $\Ga$ on the locally compact, Hausdorff, totally disconnected space $\Go,$ we get a corresponding partial action $\alpha$ of the semigroup $\Ga$ on the $R$-algebra $\Lco$ of all locally constant, compactly supported, $R$-valued functions on $\Go,$ where $R$ is a commutative ring with unit. More precisely, for every $B \in \Ga,$ we have that $\alpha_B$ is an isomorphism from
 
$$ D_{B^*}=\{f \in \Lco  \ : \ f(x)=0, \, \forall x \in \Go \setminus s(B)\}$$
onto 
$$ D_B=\{f \in \Lco  \ : \ f(x)=0, \, \forall x \in \Go \setminus r(B)\},$$
which is defined by
  $$ \alpha_B(f)(x)= \left\lbrace \begin{array}{ccr}
    f \circ \theta_{B^*}(x), & & \mbox{se} \,\, x \in r(B) \\
    0                        & & \mbox{se} \,\, x \notin r(B)  
    \end{array}\right..
  $$

We can now prove the last theorem of the paper.

\begin{theorem}
Let $\G$ be an ample and Hausdorff groupoid, let $\theta$ be the partial action of the inverse semigroup $\Ga$ over the unit space $\Go$ (as defined in Proposition~\ref{PABisections}), and let $\alpha$ be the corresponding partial action of $\Ga$ on $\Lco$ (as defined above). Then the Steinberg algebra $A_R(\G)$ is isomorphic to the partial skew inverse semigroup ring $\Lgo$ as $R$-algebras.
\end{theorem}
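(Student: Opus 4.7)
The plan is to construct an $R$-algebra homomorphism $\Phi\colon \mathcal{L} \to A_R(\G)$ on the pre-quotient algebra $\mathcal{L}$ of formal finite sums $\sum f_B \delta_B$, verify that $\Phi$ vanishes on the ideal $\mathcal{I}$, and then prove that the induced map $\overline{\Phi}\colon \Lgo \to A_R(\G)$ is a bijection.

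On generators I set $\Phi(f\delta_B)$ to be the function $\widetilde{f}_B \in A_R(\G)$ given by $\widetilde{f}_B(b) := f(r(b))$ for $b \in B$ and $\widetilde{f}_B(b) := 0$ otherwise, and extend linearly. Since $r_B\colon B \to r(B)$ is a homeomorphism and $f$ is locally constant, $\widetilde{f}_B$ is locally constant and supported on the compact clopen bisection $B$, so it lies in $A_R(\G)$. For multiplicativity I use that for any $b \in BC$ the bisection property gives a unique factorization $b = cd$ with $c \in B$, $d \in C$, so that the convolution $\Phi(f_B\delta_B) * \Phi(f_C\delta_C)(b)$ collapses to $f_B(r(b))\,f_C(\theta_{B^{-1}}(r(b)))$, which matches the value at $r(b)$ of $\alpha_B(\alpha_{B^*}(f_B)f_C)$ as dictated by the explicit formulas for $\alpha$ and $\theta$. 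For $b \notin BC$ both sides vanish.

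Next, I verify that $\Phi$ kills $\mathcal{I}$. For $B \subseteq C$ and $f \in D_B$, the functions $\Phi(f\delta_B)$ and $\Phi(f\delta_C)$ trivially coincide on $B$ and on $\G \setminus C$; on any $b \in C \setminus B$ the injectivity of $r|_C$ forces $r(b) \notin r(B)$, so $f(r(b)) = 0$ since $f$ vanishes off $r(B)$. Thus $\Phi$ descends to $\overline{\Phi}\colon \Lgo \to A_R(\G)$. Surjectivity is immediate: every element of $A_R(\G)$ is a finite $R$-linear combination of characteristic functions $1_B$ of compact bisections, and $\overline{\Phi}(\overline{1_{r(B)}\delta_B}) = 1_B$.

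The main obstacle is injectivity, which I handle through a normal-form argument. Given $x = \sum_i \overline{f_i \delta_{B_i}} \in \Lgo$, I first write each $f_i$ as $\sum_j r_{ij}1_{K_{ij}}$ with the $K_{ij}$ pairwise disjoint compact open subsets of $r(B_i)$, and for each $K_{ij}$ pass to the compact sub-bisection $B_{ij} := r_{B_i}^{-1}(K_{ij}) \subseteq B_i$; the defining relation of $\mathcal{I}$ lets me rewrite the corresponding summand as $r_{ij}\overline{1_{r(B_{ij})}\delta_{B_{ij}}}$, reducing $x$ to a combination of terms of this form. I then refine the finite family $\{B_k\}$ obtained this way to pairwise disjoint compact bisections $\{A_l\}$, inductively using that in a Hausdorff ample groupoid $A \cap B$ and $A \setminus B$ are compact bisections for any compact bisections $A, B$ (the latter because $A \cap B$ is clopen in the compact set $A$), arranged so that each $B_k$ is a disjoint union of some of the $A_l$. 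Splitting each $1_{r(B_k)}$ along $r(B_k) = \bigsqcup_{A_l \subseteq B_k} r(A_l)$ (using injectivity of $r|_{B_k}$) and once more invoking $\mathcal{I}$ yields the normal form $x = \sum_l c_l \overline{1_{r(A_l)}\delta_{A_l}}$ with the $A_l$ pairwise disjoint. Then $\overline{\Phi}(x) = \sum_l c_l 1_{A_l}$, and the disjointness of the $A_l$ forces every $c_l = 0$ whenever $\overline{\Phi}(x) = 0$, whence $x = 0$ in $\Lgo$.
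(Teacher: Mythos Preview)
Your proof is correct and follows the paper's strategy closely: the map $\Phi$, its well-definedness, multiplicativity, surjectivity, and the verification that it kills $\mathcal{I}$ are all handled exactly as in the paper. The only real difference lies in the injectivity step. The paper proves injectivity by constructing a candidate left inverse $\varphi\colon A_R(\G)\to \Lgo$, sending $\sum_j b_j 1_{B_j}$ (with the $B_j$ pairwise disjoint) to $\sum_j \overline{b_j 1_{r(B_j)}\delta_{B_j}}$, and then devotes most of its effort to showing that $\varphi$ is additive via the same disjoint-refinement trick you use; once additivity is in hand, $\varphi\circ\tilde\psi=\mathrm{id}$ follows term by term. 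Your normal-form argument reaches the same conclusion more directly: instead of defining a map on $A_R(\G)$ and worrying about its well-definedness, you rewrite an arbitrary element of $\Lgo$ into a sum over pairwise disjoint bisections and read off the coefficients from the image. Both arguments rest on the identical technical core (compact bisections in a Hausdorff ample groupoid are clopen, so Boolean combinations stay compact, and the $\mathcal{I}$-relation lets one pass to sub-bisections), so the difference is organizational rather than mathematical; your packaging is arguably a little cleaner since it avoids the apparent circularity of defining $\varphi$ via a non-canonical decomposition.
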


\begin{proof}
We will first show the existence of an epimorphism $\psi: \mathcal{L} \rightarrow A_R(\G),$ that vanishes in the ideal $\mathcal{I}$ (thus, we can extend $\psi$ to an epimorphism $\tilde\psi$ of the quotient $\mathcal{L}/\mathcal{I}= \Lgo$). To conclude, we will show that $\tilde\psi$ admits a right inverse map $\varphi$. 

Define the homomorphism $\psi: \mathcal{L} \rightarrow A_R(\G),$ in elements of the form $f_B\delta_B$, by
$$\psi(f_B\delta_B)(x)= \left\lbrace \begin{array}{ccr}
                                      f_B(r(x)) & & \mbox{if} \,\, x \in B. \\
                                              0 & & \mbox{if} \,\, x \notin B, 
\end{array}\right.$$
and extend it linearly to $\mathcal{L}.$ 

We need to show that $\psi$ is well defined, that is, the function $\psi(f_B \delta_B)$ is locally constant and has compact support.

If $ x \in \G \setminus B$ then $U:= \G \setminus B = B^c $ is an open neighborhood of $x$ and  
  $ \psi(f_B\delta_B)|_U \equiv 0.$
Now, if $x \in B$ then  
  $$\psi(f_B\delta_B)(x)=f_B(r(x)).$$
Since $f_B$ is constant locally, there exist an open neighborhood $V$ of $r(x)$ such that $f_B|_V$ is constant. We can take $V \subseteq r(B)$ because $r(B)$ is open. Thus $U:=r_B^{-1}(V)$ is an open neighborhood of $x$ and   
  $ \psi(f_B\delta_B)|_U =f_B|_V \,\, \mbox{is constant.}$
Therefore, $ \psi(f_B\delta_B)$ is locally constant. Moreover 
  $$ \mbox{supp} ( \psi(f_B\delta_B) ) \subseteq B,$$
and since $\mbox{supp} ( \psi(f_B\delta_B)) $ is closed and $B$ is compact, we have that the support of  $ \psi(f_B\delta_B)$ is compact. Hence $\psi(f_B\delta_B) \in A_R(\G).$ 
Since $A_R(\G)$ is an algebra and any element $F\in \mathcal{L} $ is a finite sum of elements of the form $f_B\delta_B,$ we conclude that $\psi(F)\in A_R(\G).$

Next, we will verify that $\psi$ is multiplicative. By linearity, it is enough to verify that this application is multiplicative in the homogeneous terms. So, let $f_B\delta_B, f_C\delta_C \in \mathcal{L} $ and $x \in \G.$ Then,
\begin{align*}
\psi(f_B\delta_B f_C \delta_C)(x) 
       & = \psi(\alpha_B(\alpha_{B^*}(f_B)f_C)\delta_{BC})(x) \\
       & = \left\lbrace \begin{array}{lcl}
           \alpha_B(\alpha_{B^*}(f_B)f_C)(r(x)), & & \mbox{if} \, x \in BC \\
                                              0, & & \mbox{if} \, x \notin BC
           \end{array}\right.\\
       & \stackrel{r(x) \in r(B)}{=} \left\lbrace \begin{array}{lcl}
           \alpha_{B^*}(f_B)f_C(\theta_{B^*}(r(x)), & & \mbox{if} \, x \in BC \\
                                                 0, & & \mbox{if} \, x \notin BC
           \end{array}\right.\\
       & = \left\lbrace \begin{array}{lcl}
           \alpha_{B^*}(f_B(\theta_{B^*}(r(x))f_C(\theta_{B^*}(r(x)), & & \mbox{if} \, x \in BC \\
                                                                   0, & & \mbox{if} \, x \notin BC
           \end{array}\right.\\
       & \stackrel{\theta_{B^*}(r(x)) \in s(B)}{=} \left\lbrace \begin{array}{lcl}
           f_B(r(x))f_C(\theta_{B^*}(r(x)), & & \mbox{if} \, x \in BC \\
                                         0, & & \mbox{if} \, x \notin BC
           \end{array}\right.\\
       & = \left\lbrace \begin{array}{lcl}
           f_B(r(x))f_C(s_B(r_B^{-1}(r(x)), & & \mbox{if} \, x \in BC \\
                                         0, & & \mbox{if} \, x \notin BC
           \end{array}\right.
\end{align*}
If $x \in BC$ then there exists $b \in B$ and $c \in C$ such that $s(b)=r(c)$ and $x=bc.$ Notice that $r(b)=r(x)$ and $b$ is the only element of $B$ such that $r(b)=r(x) \in r(B).$ Thus
  $$ r_B^{-1}(r(x))=b, \text{ and } s_B(r_B^{-1} (r(x)))= s_B(b)=s(b). $$
Hence 
\begin{align*}
     & = \left\lbrace \begin{array}{lcl}
           f_B(r(x))f_C(s(b))), & & \mbox{if} \,\, b \in B, c \in C  \,\, \mbox{and} \,\, x=bc  \\
                             0, & & \mbox{otherwise.}
           \end{array}\right.\\
       & \stackrel{s(b)=r(c)}{=} \left\lbrace \begin{array}{lcl}
                                 f_B(r(x))f_C(r(c)), & & \mbox{if} \,\, b \in B, c \in C  \,\, \mbox{and} \,\, x=bc  \\
                                                  0, & & \mbox{otherwise.}
                                 \end{array}\right.\\
       & \stackrel{r(x)=r(b)}{=} \left\lbrace \begin{array}{lcl}
                                 f_B(r(b))f_C(r(c)), & & \mbox{if} \,\, b \in B, c \in C  \,\, \mbox{and} \,\, x=bc  \\
                                                  0, & & \mbox{otherwise.}
                                 \end{array}\right.\\                                
       & = \sum_{\begin{array}{c}
                 s(b)=r(c) \\
                 x=bc
                 \end{array}} \psi(f_B\delta_B)(b)\psi(f_C \delta_C)(c)\\
       & = \psi(f_B \delta_B)\psi(f_C \delta_C)(x).
\end{align*}

In order to prove the surjectivity of $\psi$ let $f \in A_R(\G).$ We have seen that $f$ can be written as 
  $$ \sum_{i=1}^n r_i 1_{B_i},$$
where $n \in \N,$ $r_i \in R$, and $B_i$ are pairwise disjoint, for all $i=1, \cdots, n$. For each $i,$  we define  $f_{B_i}$ by
$$
f_{B_i}(x): = \left\lbrace \begin{array}{lcl}
                  r_i, & & \mbox{if} \,\, x \in r(B_i) \\
                    0, & & \mbox{if} \,\, x \in \Go \setminus r(B_i).
                \end{array}\right. 
$$
Clearly  $f_{B_i}$  belongs to $D_{B_i}.$ For $y \in \G$ we have that
\begin{align*}
\psi(f_{B_i}\delta_{B_i})(y) & = \left\lbrace \begin{array}{lcl}
                                 f_{B_i}(r(y)), & & \mbox{if} \,\, y \in B_i \\
                                             0, & & \mbox{if} \,\, y \notin B_i
                                 \end{array}\right. \\
                             & = \left\lbrace \begin{array}{lcl}
                                 r_i, & & \mbox{if} \,\, y \in B_i \\
                                 0, & & \mbox{if} \,\, y \notin B_i,
                                 \end{array}\right.
\end{align*}
that is,  $\psi(f_{B_i}\delta_{B_i})= r_i1_{B_i}.$ Taking $F= \sum_{i=1}^n f_{B_i}\delta_{B_i} \in \mathcal{L}$ we obtain that
  $$ \psi(F)= \sum_{i=1}^n r_i 1_{B_i} = f,$$ 
and thus $\psi$ is surjective.

Next we will show that $\psi(\mathcal{I})=\{0\},$  where $\mathcal{I}$ is the ideal of $\mathcal{L}$  generated by the set
 {\small $$\{f\delta_B-f\delta_A \, : \, B \leq A \,\,\, \mbox{and} \,\,\, f \in D_B \}=  \{f\delta_B-f\delta_A \, : \, B \subseteq A \,\,\, \mbox{and} \,\,\, f \in D_B  \subseteq D_A\}.$$}
Since $\psi$ is a homomorphism it is enough to show that $\psi$  is zero in the generators of $\mathcal{I}.$ Let $x \in \G.$ 

\begin{itemize}
\item If $x \notin A,$ then $x \notin B$ and $$ \psi(f\delta_B)(x)-\psi(f\delta_A)(x)=0. $$

\item If $x \in A \setminus B,$ then $r(x) \in r(A)\setminus r(B)$ ($r$ is injective in $A$).  Thus $$ \psi(f\delta_B)(x)-\psi(f\delta_A)(x)= \psi(f \delta_A)(x)=f(r(x))=0,$$ because $f \in D_B $ and $r(x) \notin r(B).$

\item If $x \in B,$ then  $x \in A$ and $$ \psi(f\delta_B)(x)-\psi(f\delta_A)(x)=f(r(x))-f(r(x))=0.$$
\end{itemize}
This proves that $\psi$ vanishes on $\mathcal{I}$.

We can now define a map $\tilde{\psi}$ of the quotient $\mathcal{L}/\mathcal{I}=\Lgo$ on $A_R(\G).$ Given $\overline{f_{B_i} \delta_{B_i}} \in \Lgo$ let
  $$ \tilde{\psi}\left(\sum_{i=1}^{n}\overline{f_{B_i} \delta_{B_i}}\right):= \psi\left(\sum_{i=1}^n f_{B_i}\delta_{B_i}\right).$$

To check that $\tilde{\psi}$ is well defined suppose that $ \sum_{i=1}^n \overline{f_{B_i}\delta_{B_i}}= \sum_{j=1}^m \overline{f_{A_j}\delta_{A_i}}.$ Then $ F:= \sum_{i=1}^n f_{B_i}\delta_{B_i}- \sum_{j=1}^m f_{A_j}\delta_{A_i} \,\, \in \,\, \mathcal{I}$ and hence $\psi(F)=0.$ Therefore
$$\tilde{\psi} \left(\sum_{i=1}^n \overline{f_{B_i}\delta_{B_i}}\right)= \psi\left(\sum_{i=1}^n f_{B_i}\delta_{B_i}\right)= \psi\left(\sum_{j=1}^m f_{A_j}\delta_{A_i}\right)= \tilde{\psi}\left(\sum_{j=1}^m \overline{f_{A_j}\delta_{A_i}}\right).$$

Clearly $\tilde{\psi}$ is a surjective homomorphism. In order to prove that $\tilde{\psi}$ is an isomorphism it suffices to verify that $\tilde{\psi}$ admits a left inverse. For this, consider the map $\varphi : A_R(\G) \rightarrow \Lgo$ defined as follows: Given $f \in A_R(\G)$, write it (uniquely) as
  $$f = \sum_{j=1}^n b_j 1_{B_j},$$ 
where $n \in N$ and $B_j$ are pairwise disjoint compact bisections of $\G$ for all $j=1, \cdots, n.$ Define 
  $$\varphi(f) = \varphi\left(\sum_{i=1}^n b_j 1_{B_j}\right) :=\sum_{j=1}^n \overline{b_j 1_{r(B_j)}\delta_{B_j}}. $$

We need to show that $\varphi \circ \tilde{\psi}$ is indeed the identity map of $\Lgo.$ 

Notice that, since each $f_B  \in D_B$ can be written as
  $$f_B= \sum_{k=1}^m c_k 1_{B_k},$$
where $m \in N, c_k \in R$ and $B_k$ are pairwise disjoint compact-open subset of $r(B)$ such that $\cup_{k=1}^m B_k \subseteq r(B),$ we have that
\begin{align*}
\psi(f_B \delta_B)(x) & = \left\lbrace \begin{array}{lcl}
                          \sum_{k=1}^m c_k 1_{B_k}(r(x)),  & & \mbox{if} \,\, x \in B \\
                          0,                               & & \mbox{if} \,\, x \notin B \\
                          \end{array}\right. \\
                      & = \left\lbrace \begin{array}{lcl}
                          c_k,  & & \mbox{if} \,\, x \in B \cap r^{-1}(B_k) \\
                          0,     & & \mbox{otherwise,}
                          \end{array}\right. \\
\end{align*} 
and, since the subsets $B \cap r^{-1}(B_k)$ are pairwise disjoint compact-open, we have that 
  $$ \psi(f_B \delta_B)= \sum_{k=1}^m c_k 1_{B \cap r^{-1}(B_k)}. $$ 
Thus,
\begin{align*}
\varphi\circ\tilde{\psi}(\overline{f_B \delta_B}) = \varphi \circ \psi (f_B \delta_B)  
                                        & = \varphi\left(\sum_{k=1}^m c_k 1_{B \cap r^{-1}(B_k)} \right) \\
                                        & = \sum_{k=1}^m \overline{ c_k 1_{r(B \cap r^{-1}(B_k))}\delta_{B \cap r^{-1}(B_k)}}\\
                                        & \stackrel{r(B \cap r^{-1}(B_k))=B_k}{=} \sum_{k=1}^m \overline{ c_k 1_{B_k} \delta_{B \cap r^{-1}(B_k)}  }\\
                                        & \stackrel{B \cap r^{-1}(B_k) \subseteq B}{=}\sum_{k=1}^m \overline{ c_k 1_{B_k} \delta_{B}  }\\
                                        & = \overline{\sum_{k=1}^m  (c_k 1_{B_k} \delta_{B})  }\\
                                        & = \overline{(\sum_{k=1}^m c_k 1_{B_k}) \delta_{B} }\\
                                        & = \overline{f_B \delta_{B} }.\\
\end{align*}

Notice that as $\tilde{\psi}$ is additive, if we prove that $\varphi$ is also additive, then we have that
 $$\varphi\circ\tilde{\psi}(\sum f_B \delta_B) = \sum( \varphi\circ\tilde{\psi} (f_B \delta_B))=\sum f_B \delta_B.$$
and so we conclude that $\tilde{\psi}$ is an isomorphism as desired. 

So it remains to prove that $\varphi$ is additive. Let $f=\sum_{i=1}^n r_i 1_{A_i}$ and $g=\sum_{j=1}^m s_j 1_{B_j}$ be functions in $A_R(\G)$, where $A_i$'s and $B_j$'s are pairwise disjoint compact bisections. We want to write $f+g$ also as a linear combination of characteristic functions of compact bisection that are disjoint.

To better understand what we will do next, consider that in Figure \ref{Rotulo} the rectangular regions are the bisections $A_i$'s, and the circular regions are the bisections $B_j$'s. Due to the disjunction of the bisections $A_i$'s and the $B_j$'s, we have only intersections of rectangular regions with circular ones (i.e., there are no intersections between circles and circles or between rectangles and rectangles).

\begin{figure}[H]
\centering
\includegraphics[scale=0.3]{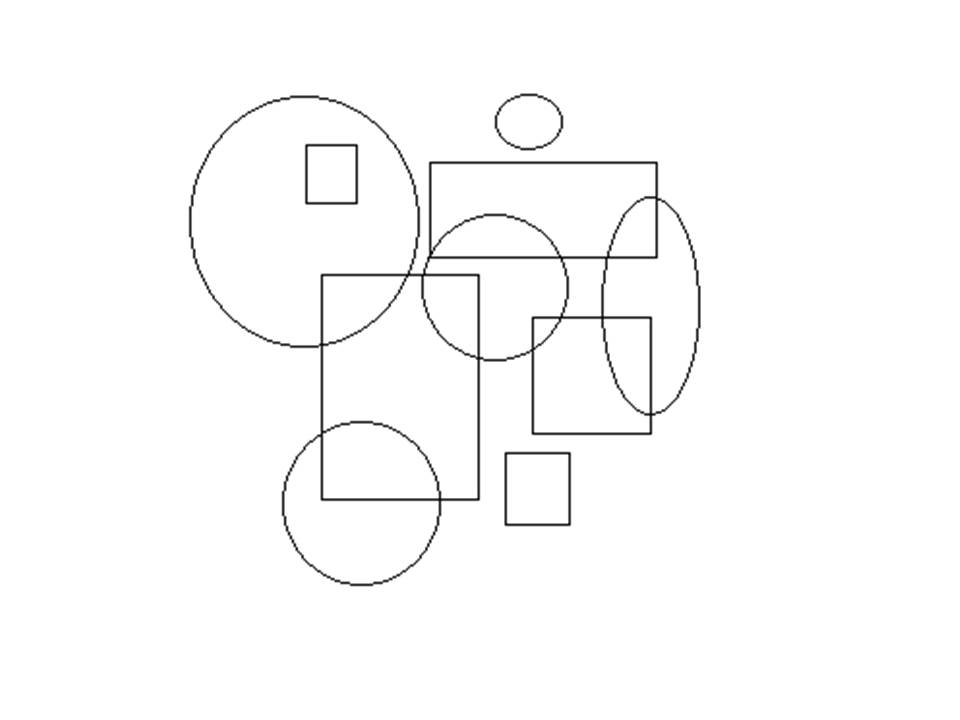}
\caption{}
\label{Rotulo}
\end{figure}

With these intersections we obtain three new types of regions, which are formed in the following ways:

$$ A_i\setminus(\cup_{j=1}^{m}B_j), \,\,\,\,\,\, B_j\setminus(\cup_{i=1}^{n}A_i),\,\,\,\, \mbox{and} \,\,\,\,\,  A_i \cap B_j, $$

where $i=1, \cdots, n$ and $j=1,\cdots, m.$ Note that some of these sets may be empty.

We have that $A_i \setminus (\cup_{j=1}^{m}B_j) = A_i \cap (\cup_{j=1}^{m}B_j)^c$ is a clopen subset of $A_i.$ Since $\G$ is Hausdorff, this set is compact and hence a compact bisection of $\G.$  Similarly  $B_j \setminus (\cup_{i=1}^{n}A_i)$ is a compact bisection of $\G.$ It is immediate that $A_i\cap B_j \subseteq A_i$ is open-compact and so it is also a compact bisection.

Let $x \in \G.$   
\begin{itemize}
\item If $x \in A_i \setminus (\cup_{j=1}^{m}B_j)$ then $f(x)=r_i, $ \vspace{0.1cm}
\item If $x \in B_j \setminus (\cup_{i=1}^{n}A_i)$ then $f(x)=s_j.$ \vspace{0.1cm}
\item If $x \in A_i \cap B_j$ then $(f+g)(x)= r_i+s_j.$
\end{itemize}

We want to rewrite $f+g$ in the form $\sum_k t_k1_{C_k}.$ To do so, let:

\begin{itemize}
\item $C_i := A_i \setminus (\cup_{j=1}^{m}B_j)$ \,\, and \,\, $t_i:=r_i,$ \,\, for all \,\, $i=1, \cdots, n;$ \vspace{0.1cm}
\item $C_{n+j}:= B_j \setminus (\cup_{i=1}^{n}A_i) $ \,\, and \,\, $t_{n+j}:=s_j,$ \,\, for all \,\, $j=1, \cdots ,m;$ \vspace{0.1cm}
\item $C_{n+m+(j-1)n+i} := A_i \cap B_j$ \,\, and \,\, $t_{n+m+(j-1)n+i}:=r_i+s_j,$ \,\, for all \,\, $i=1, \cdots, n$ and $j=1, \cdots, m.$
\end{itemize}

With these definitions we have that
$$ f+g=\sum_{k=1}^{n+m+m\cdot n} t_k 1_{C_k}. $$

Note that if $C_k$ is empty then $1_{C_k} $ and $1_{r(C_k)}$ are null functions and do not interfere with our calculation below. Also note that:

\begin{itemize}
\item $\overline{r_i 1_{r(A_i\setminus \cup B_j)} \delta_{A_i \setminus \cup B_j}} = \overline{r_i 1_{r(A_i\setminus \cup B_j)} \delta_{A_i }}$ because $A_i \setminus \cup B_j  \subset A_i;$ \vspace{0.15cm}

\item $\overline{s_j 1_{r(B_j\setminus \cup A_i)} \delta_{B_j \setminus \cup A_i}} = \overline{s_j 1_{r(B_j \setminus \cup A_i)} \delta_{B_j}}$ because $B_j \setminus \cup A_i  \subset B_j;$ \vspace{0.15cm}

\item $\overline{r_i 1_{r(A_i \cap B_j)} \delta_{A_i \cap B_j}} = \overline{r_i 1_{r(A_i \cap B_j)} \delta_{A_i }}$ because $A_i \cap B_j \subset A_i;$ \vspace{0.15cm}

\item $\overline{s_j 1_{r(A_i \cap B_j)} \delta_{A_i \cap B_j}} = \overline{s_j 1_{r(A_i \cap B_j)} \delta_{B_j }}$ because $A_i \cap B_j \subset B_j.$
\end{itemize}
  
We now obtain that $\varphi(f+g)=\varphi(f)+\varphi(g)$ from the following computation:

\begin{align*}
\varphi(f+g) & = \sum_k^{n+m+m \cdot n} \overline {t_k 1_{r(C_k)}\delta_{C_k}} \\
             & = \sum_{i=1}^n \overline{r_i 1_{r(A_i \setminus \cup B_j)} \delta_{A_i \setminus \cup B_j}}
               + \sum_{j=1}^m \overline{s_j 1_{r(B_j\setminus \cup A_i)} \delta_{B_j \setminus \cup A_i}} \\
             & + \sum_{i=1}^n \sum_{j=1}^m \overline{(r_i + s_j) 1_{r(A_i \cap B_j)} \delta_{A_i \cap B_j}} \\
             & = \sum_{i=1}^n \overline{r_i 1_{r(A_i \setminus \cup B_j)} \delta_{A_i \setminus \cup B_j}}
               + \sum_{j=1}^m \overline{s_j 1_{r(B_j\setminus \cup A_i)} \delta_{B_j \setminus \cup A_i}} \\
             & + \sum_{i=1}^n \sum_{j=1}^m \overline{r_i  1_{r(A_i \cap B_j)} \delta_{A_i \cap B_j}}
               + \sum_{i=1}^n \sum_{j=1}^m \overline{s_j 1_{r(A_i \cap B_j)} \delta_{A_i \cap B_j}} \\
             & = \sum_{i=1}^n \overline{r_i 1_{r(A_i \setminus \cup B_j)} \delta_{A_i}}
               + \sum_{j=1}^m \overline{s_j 1_{r(B_j\setminus \cup A_i)} \delta_{B_j }} \\
             & + \sum_{i=1}^n \sum_{j=1}^m \overline{r_i  1_{r(A_i \cap B_j)} \delta_{A_i}}
               + \sum_{i=1}^n \sum_{j=1}^m \overline{s_j 1_{r(A_i \cap B_j)} \delta_{B_j}} \\  
             & = \sum_{i=1}^n \overline{[r_i( 1_{r(A_i\setminus \cup B_j)}+ \sum_{j=1}^m 1_{r(A_i \cap B_j)})]\delta_{A_i}}
               + \sum_{j=1}^m \overline{[s_j (1_{r(B_j\setminus \cup A_i)}+ \sum_{i=1}^n 1_{r(A_i \cap B_j)})] \delta_{B_j}}\\
             & = \sum_{i=1}^n \overline{r_i 1_{r(A_i)}\delta_{A_i}} 
               + \sum_{j=1}^m \overline{s_j 1_{r(B_j)} \delta_{B_j}}  \\
             & = \varphi(f) + \varphi (g). 
\end{align*}

\end{proof}

\noindent V. M. Beuter, Pós-Graduação em Matemática Pura e Aplicada, Universidade Federal de Santa Catarina, Florianópolis, 88040-900, Brasil and Departamento de Matemática, Universidade do Estado de Santa Catarina, Joinville, 89219-710, Brasil.\\ 
E-mail: vivibeuter@gmail.com \\

\noindent D. Gonçalves, Departamento de Matemática, Universidade Federal de Santa Catarina, Florianópolis, 88040-900, Brasil.\\
E-mail: daemig@gmail.com

\end{document}